\newcommand{\nc}{\newcommand}
\nc{\fg}{\mathfrak{f} } \nc{\vg}{\mathfrak{v} } \nc{\wg}{\mathfrak{w} }
\nc{\zg}{\mathfrak{z} } \nc{\ngo}{\mathfrak{n} } \nc{\kg}{\mathfrak{k} }
\nc{\mg}{\mathfrak{m} } \nc{\bg}{\mathfrak{b} } \nc{\ggo}{\mathfrak{g} }
\nc{\ggob}{\overline{\mathfrak{g}} } \nc{\sog}{\mathfrak{so} }
\nc{\sug}{\mathfrak{su} } \nc{\spg}{\mathfrak{sp} } \nc{\slg}{\mathfrak{sl} }
\nc{\glg}{\mathfrak{gl} } \nc{\cg}{\mathfrak{c} } \nc{\rg}{\mathfrak{r} }
\nc{\hg}{\mathfrak{h} } \nc{\tg}{\mathfrak{t} } \nc{\ug}{\mathfrak{u} }
\nc{\dg}{\mathfrak{d} } \nc{\ag}{\mathfrak{a} } \nc{\pg}{\mathfrak{p} }
\nc{\sg}{\mathfrak{s} } \nc{\affg}{\mathfrak{aff} } \nc{\qg}{\mathfrak{q} }
\nc{\pca}{\mathcal{P}} \nc{\nca}{\mathcal{N}} \nc{\lca}{\mathcal{L}}
\nc{\oca}{\mathcal{O}} \nc{\mca}{\mathcal{M}} \nc{\tca}{\mathcal{T}}
\nc{\aca}{\mathcal{A}} \nc{\cca}{\mathcal{C}} \nc{\gca}{\mathcal{G}}
\nc{\sca}{\mathcal{S}} \nc{\hca}{\mathcal{H}} \nc{\bca}{\mathcal{B}}
\nc{\dca}{\mathcal{D}} \nc{\ica}{\mathcal{I}} \nc{\val}{\operatorname{val}}
\nc{\vp}{\varphi} \nc{\ddt}{\frac{d}{dt}} \nc{\dds}{\frac{d}{ds}}
\nc{\dpar}{\frac{\partial}{\partial t}} \nc{\im}{\mathrm{i}}
\nc{\SO}{\mathrm{SO}} \nc{\Spe}{\mathrm{Sp}} \nc{\Sl}{\mathrm{SL}}
\nc{\SU}{\mathrm{SU}} \nc{\Or}{\mathrm{O}} \nc{\U}{\mathrm{U}} \nc{\Gl}{\mathrm{GL}}
\nc{\Se}{\mathrm{S}} \nc{\Cl}{\mathrm{Cl}} \nc{\Spein}{\mathrm{Spin}}
\nc{\Pin}{\mathrm{Pin}} \nc{\G}{\mathrm{GL}_n(\RR)} \nc{\g}{\mathfrak{gl}_n(\RR)}
\nc{\RR}{{\Bbb R}} \nc{\HH}{{\Bbb H}} \nc{\CC}{{\Bbb C}} \nc{\ZZ}{{\Bbb Z}}
\nc{\FF}{{\Bbb F}} \nc{\NN}{{\Bbb N}} \nc{\QQ}{{\Bbb Q}} \nc{\PP}{{\Bbb P}} \nc{\OO}{{\Bbb O}}
\nc{\vs}{\vspace{.2cm}} \nc{\vsp}{\vspace{1cm}} \nc{\ip}{\langle\cdot,\cdot\rangle}
\nc{\ipp}{(\cdot,\cdot)} \nc{\la}{\langle} \nc{\ra}{\rangle} \nc{\unm}{\frac{1}{2}}
\nc{\unc}{\frac{1}{4}} \nc{\und}{\frac{1}{16}} \nc{\no}{\vs\noindent}
\nc{\lam}{\Lambda^2(\RR^n)^*\otimes\RR^n} \nc{\tangz}{{\rm T}^{\rm Zar}}
\nc{\nor}{{\sf n}}  \nc{\mum}{/\!\!/} \nc{\kir}{/\!\!/\!\!/}
\nc{\Ri}{\tfrac{4\Ric_{\mu}}{||\mu||^2}} \nc{\ds}{\displaystyle}
\nc{\ben}{\begin{enumerate}} \nc{\een}{\end{enumerate}} \nc{\f}{\frac}
\nc{\lb}{[\cdot,\cdot]} \nc{\isn}{\tfrac{1}{||v||^2}}
\nc{\gkp}{(\ggo=\kg\oplus\pg,\ip)} \nc{\ukh}{(\ug=\kg\oplus\hg,\ip)}
\nc{\tgkp}{(\tilde{\ggo}=\kg\oplus\pg,\ip)}
\nc{\wt}{\widetilde} \nc{\mm}{M}
\nc{\iop}{\mathtt{i}} \nc{\jop}{\mathtt{j}}
\nc{\Hess}{\operatorname{Hess}} \nc{\ad}{\operatorname{ad}}
\nc{\Ad}{\operatorname{Ad}} \nc{\rank}{\operatorname{rank}}
\nc{\Irr}{\operatorname{Irr}} \nc{\End}{\operatorname{End}}
\nc{\Aut}{\operatorname{Aut}} \nc{\Inn}{\operatorname{Inn}}
\nc{\Der}{\operatorname{Der}} \nc{\Ker}{\operatorname{Ker}}
\nc{\Iso}{\operatorname{Iso}} \nc{\Diff}{\operatorname{Diff}}
\nc{\Lie}{\operatorname{Lie}} \nc{\tr}{\operatorname{tr}} \nc{\dif}{\operatorname{d}}
\nc{\sen}{\operatorname{sen}} \nc{\modu}{\operatorname{mod}}
\nc{\CRic}{\operatorname{PP}} \nc{\Cric}{\operatorname{P}} \nc{\Ricci}{\operatorname{Ric}}
\nc{\sym}{\operatorname{sym}} \nc{\herm}{\operatorname{herm}} \nc{\symac}{\operatorname{sym^{ac}}}
\nc{\symc}{\operatorname{sym^{c}}} \nc{\scalar}{\operatorname{scal}}
\nc{\grad}{\operatorname{grad}} \nc{\ricci}{\operatorname{Rc}}
\nc{\Nor}{\operatorname{Norm}}  \nc{\ricc}{\operatorname{Rc^{c}}}
\nc{\Ricc}{\operatorname{Ric^{c}}} \nc{\ricac}{\operatorname{Rc^{ac}}}
\nc{\Ricac}{\operatorname{Ric^{ac}}} \nc{\Riem}{\operatorname{R}}
\nc{\riccig}{\operatorname{ric^{\gamma}}} \nc{\Rin}{\operatorname{M}}
\nc{\Le}{\operatorname{L}} \nc{\tang}{\operatorname{T}}
\nc{\level}{\operatorname{level}} \nc{\rad}{\operatorname{r}}
\nc{\abel}{\operatorname{ab}} \nc{\CH}{\operatorname{CH}}
\nc{\mcc}{\operatorname{mcc}} \nc{\Adj}{\operatorname{Adj}}
\nc{\Order}{\operatorname{O}}  \nc{\inj}{\operatorname{inj}} \nc{\proy}{\operatorname{proy}}
\nc{\vol}{\operatorname{vol}} \nc{\Diag}{\operatorname{Dg}}
\nc{\Spec}{\operatorname{Spec}} \nc{\Ima}{\operatorname{Im}} \nc{\Rea}{\operatorname{Re}}
\nc{\spann}{\operatorname{sp}}
\theoremstyle{plain}
\newtheorem{theorem}{Theorem}[section]
\newtheorem{proposition}[theorem]{Proposition}
\newtheorem{corollary}[theorem]{Corollary}
\theoremstyle{definition}
\theoremstyle{remark}
\newtheorem{remark}[theorem]{Remark}
\newtheorem{example}[theorem]{Example}
\title{The classification of ERP $G_2$-structures on Lie groups}
\author{Jorge Lauret} \author{Marina Nicolini}
\address{Universidad Nacional de C\'ordoba, FaMAF and CIEM, 5000 C\'ordoba, Argentina}
\email{lauret@famaf.unc.edu.ar} \email{mnicolini@famaf.unc.edu.ar}
\thanks{This research was partially supported by grants from FONCYT and Universidad Nacional de C\'ordoba.}
\begin{document}

\maketitle

\begin{abstract}
A complete classification of left-invariant closed $G_2$-structures on Lie groups which are extremally Ricci pinched (i.e.\ $d\tau = \tfrac{1}{6}|\tau|^2\vp + \tfrac{1}{6}\ast(\tau\wedge\tau)$), up to equivalence and scaling, is obtained.  There are five of them, they are defined on five different completely solvable Lie groups and the $G_2$-structure is exact in all cases except one, given by the only example in which the Lie group is unimodular.
\end{abstract}


\section{Introduction}\label{intro}

A $G_2$-{\it structure} on a $7$-dimensional differentiable manifold $M$ is a differential $3$-form $\vp$ on $M$ which is positive (or definite), in the sense that $\vp$ (uniquely) determines a Riemannian metric $g$ on $M$ together with an orientation.  Playing a role analogous in a way to that of almost-K\"ahler structures in almost-hermitian geometry, closed $G_2$-structures (i.e.\ $d\vp=0$) have been studied by several authors (see e.g.\ \cite{Bry,ClyIvn1, ClyIvn2,FrnFinRff,LtyWei,PdsRff}).  They appear as natural candidates to be deformed via the Laplacian flow $\dpar\vp(t) = \Delta\vp(t)$ toward a {\it torsion-free} $G_2$-structure (i.e.\ $d\vp=0$ and $d\ast\vp=0$) producing a Ricci flat Riemannian metric with holonomy contained in $G_2$ (see \cite{Lty} for an account of recent advances).  The torsion of a closed $G_2$-structure $\vp$ is completely determined by the $2$-form $\tau=-\ast d\ast\vp$, which in addition satisfies that $d\ast\vp=\tau\wedge\vp$.

The following remarkable curvature estimate for closed $G_2$-structures on a compact manifold $M$ was discovered by Bryant (see \cite[Corollary 3]{Bry}):
\begin{equation}\label{estB}
\int_M \scalar^2 \ast 1 \leq 3\int_M |\Ricci|^2 \ast 1,
\end{equation}
where $\scalar$ and $\Ricci$ respectively denote scalar curvature and Ricci tensor of $(M,g)$.  Bryant called {\it extremally Ricci-pinched} (ERP for short) the structures at which equality holds in \eqref{estB} (see \cite[Remark 13]{Bry}) and proved that they are characterized by the following neat equation,
\begin{equation}\label{ERPeq}
d\tau = \tfrac{1}{6}|\tau|^2\vp + \tfrac{1}{6}\ast(\tau\wedge\tau).
\end{equation}
In the compact case, this is actually the only way in which $d\tau$ can quadratically depend on $\tau$ (see \cite[(4.66)]{Bry}).  A non-necessarily compact $(M,\vp)$ satisfying \eqref{ERPeq} is also called ERP.

The first examples of ERP $G_2$-structures in the literature are {\it homogeneous} (i.e.\ the automorphism group $\Aut(M,\vp):=\{ f\in\Diff(M):f^*\vp=\vp\}$ acts transitively on $M$).  In \cite{Bry}, Bryant gave the first example on the homogeneous space $\Sl_2(\CC)\ltimes\CC^2/\SU(2)$, which indeed admits a compact locally homogeneous quotient (the only compact ERP structure known so far), a second one was found on a unimodular solvable Lie group in \cite{LS-ERP} and a curve was given in \cite{FinRff2} (each structure in the curve is actually equivalent to Bryant's example, although the Lie groups involved are pairwise non-isomorphic).  Three more examples were recently given on non-unimodular solvable Lie groups in \cite{ERP}.  We refer to \cite{Bll} for examples of ERP $G_2$-structures which are not homogeneous.

It is worth pointing out that homogeneous $G_2$-geometry include the following particular features:
\begin{enumerate}[{\small $\bullet$} ]
\item Torsion-free $G_2$-structures are necessarily flat by \cite{AlkKml}.

\item Closed $G_2$-structures are only allowed on non-compact manifolds by \cite{PdsRff}.

\item Einstein closed $G_2$-structures do not exist by \cite{FrnFinMnr,ArrLfn}.

\item The only other possibility for a quadratic dependence is to have $d\tau = \frac{1}{7}|\tau|^2\vp$, whose existence  is an open problem (see \cite{LS-ERP}).

\item Estimate \eqref{estB} does not hold in general, examples with $\scalar^2 >  3 |\Ricci|^2$ were found in \cite{LS-ERP}.
\end{enumerate}

In this paper, we continue the study of left-invariant ERP $G_2$-structures on Lie groups initiated in \cite{ERP}, where it was shown that the ERP condition \eqref{ERPeq} requires very strong structure constraints on the Lie algebra.  By using such a structure theorem (see Theorem \ref{main}) as a starting point, we have obtained a complete classification.

As usual, two manifolds endowed with $G_2$-structures $(M,\vp)$ and $(M',\vp')$ are called {\it equivalent} if there exists a diffeomorphism $f:M\longrightarrow M'$ such that $\vp=f^*\vp'$.  Two Lie groups endowed with left-invariant $G_2$-structures $(G,\vp)$ and $(G',\vp')$ are called {\it equivariantly equivalent} if there exists an equivalence $f:G\longrightarrow G'$ which is in addition a Lie group isomorphism (this holds if and only if $\vp=df|_e^*\vp'$, where $df|_e:\ggo\longrightarrow\ggo'$ is the corresponding Lie algebra isomorphism).

We fix a $7$-dimensional vector space $\ggo$ and for each Lie bracket $\mu$ on $\ggo$, we consider $(G_\mu,\vp)$, the simply connected Lie group $G_\mu$ with Lie algebra $(\ggo,\mu)$ endowed with the left-invariant $G_2$-structure defined by the positive $3$-form on $\ggo$ given by
\begin{align}
\vp:=&e^{127}+e^{347}+e^{567}+e^{135}-e^{146}-e^{236}-e^{245} = \omega\wedge e^7+\rho^+, \label{phi-int}
\end{align}
where $\{ e_1,\dots,e_7\}$ is a basis of $\ggo$ (orthonormal with respect to the inner product $\ip_\vp$ induced by $\vp$).

\begin{theorem}\label{clasif}
Any Lie group endowed with a left-invariant ERP $G_2$-structure is equivalent up to scaling to $(G_\mu,\vp)$, where $\mu$ is exactly one of the following Lie brackets given in Table \ref{alg}:
\begin{equation}\label{clas-list}
\mu_B, \quad \mu_{M1}, \quad \mu_{M2}, \quad \mu_{M3}, \quad \mu_J.
\end{equation}
Moreover, in order to obtain a classification up to equivariant equivalence and scaling, exactly the structures
$(G_{ \mu_{rt}},\vp)$, $r,t\in\RR$, $(r,t)\ne(0,0)$, must be added to the list \eqref{clas-list} (see also Table \ref{alg}).  The structures $(G_{ \mu_{rt}},\vp)$ are all equivalent to $(G_{\mu_B},\vp)$ and the family of Lie algebras $\mu_{rt}$, $r,t\in\RR$ is pairwise non-isomorphic (note that $\mu_{00}=\mu_B$).
\end{theorem}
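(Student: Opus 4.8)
The plan is to start from the structure theorem (Theorem \ref{main}), which — as the introduction indicates — already forces the Lie algebra $(\ggo,\mu)$ of any Lie group admitting a left-invariant ERP $G_2$-structure into a very rigid shape: a completely solvable extension with a prescribed decomposition adapted to the orthonormal basis $\{e_1,\dots,e_7\}$ and to $\vp = \omega\wedge e^7+\rho^+$. So the first step is to write down, in full, the finite-dimensional space of Lie brackets $\mu$ on $\ggo$ that are compatible both with the structure theorem and with the requirement that $\vp$ be ERP for $(G_\mu,\vp)$ (in particular $d\vp=0$ and the ERP equation \eqref{ERPeq}, rewritten as algebraic identities on $\mu$ via the standard dictionary $d e^i = -\tfrac12\sum \mu^i_{jk} e^{jk}$, $\tau = -\ast d\ast\vp$, etc.). This reduces the classification to solving a concrete polynomial system in the structure constants.

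The second step is to solve that system. I would exploit the action of the stabilizer $G_2\cap\mathrm{(upper-triangular\ changes\ of\ basis\ preserving\ the\ flag)}$ — i.e.\ the subgroup of $\Gl(\ggo)$ fixing $\vp$ and respecting whatever grading the structure theorem provides — to normalize as many free parameters as possible. Concretely, after imposing $d\vp=0$ one gets a handful of free constants; plugging into the ERP equation \eqref{ERPeq} and separating components on the basis of $3$-forms yields enough quadratic relations to cut the parameter space down to a low-dimensional variety, which one then parametrizes explicitly. The outcome should be exactly the two-parameter family $\mu_{rt}$ together with the four sporadic brackets $\mu_{M1},\mu_{M2},\mu_{M3},\mu_J$; one must of course check the Jacobi identity holds throughout (it will, by construction, but it is worth verifying it does not impose further constraints beyond the ones already used). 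Here the bookkeeping is substantial but routine; the genuine content is choosing good normalizing coordinates so the case analysis stays finite and transparent.

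The third step is to sort out the two equivalence relations. For \emph{equivariant equivalence} one must decide when two brackets in the list give isomorphic Lie algebras intertwined by an element of $G_2$: this is where one shows $\mu_{00}=\mu_B$ and that the $\mu_{rt}$ are pairwise non-isomorphic — the latter by computing isomorphism invariants of the Lie algebras (e.g.\ the characteristic polynomial / eigenvalue data of $\ad X$ for a generator of the nilradical complement, dimensions of the derived and lower central series, the nilradical, the set of eigenvalues of the ``Ricci operator'' or of semisimple parts, whichever cleanly separates the family), so that no two values of $(r,t)$ coincide. For plain \emph{equivalence} one must then show every $\mu_{rt}$ is equivalent (by a diffeomorphism, not necessarily a group isomorphism) to $(G_{\mu_B},\vp)$; the natural mechanism is that they all determine the same homogeneous space with the same invariant $G_2$-structure — e.g.\ all the $G_{\mu_{rt}}$ are different solvable groups acting simply transitively on one and the same ERP $G_2$-manifold (Bryant's example) — so one identifies the underlying $(M,\vp)$ and exhibits the equivalence as the identity of that manifold read through different transitive group actions. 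Finally one records that $\mu_B,\mu_{M1},\mu_{M2},\mu_{M3},\mu_J$ are pairwise inequivalent even as $G_2$-structures (again via Riemannian/algebraic invariants of $\ip_\vp$: scalar curvature normalized by $|\tau|^2$, the spectrum of $\Ricci$, unimodularity — $\mu_J$ being the unimodular one), which gives the ``exactly one of'' assertion.

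I expect the main obstacle to be the middle step done \emph{cleanly}: the polynomial system coming from \eqref{ERPeq} is large, and without the right preliminary normalization (using the structure theorem's grading and the residual $G_2$-symmetry) the case split can balloon. A secondary difficulty is proving the $\mu_{rt}$ are pairwise non-isomorphic — one needs an invariant fine enough to see both parameters $r$ and $t$ simultaneously, which typically means looking at the full conjugacy data of the adjoint action (real Jordan form of $\ad$ restricted to a complement of the nilradical) rather than any single numerical invariant. Establishing that all the $\mu_{rt}$ are equivalent to $\mu_B$ is, by contrast, conceptually clean once one realizes they are just distinct simply-transitive solvable subgroups of the automorphism group of Bryant's homogeneous ERP structure.
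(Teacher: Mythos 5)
Your plan is essentially the paper's strategy: start from the structure theorem, reduce to a polynomial system in the structure constants, normalize with the residual symmetry inside $G_2$, and then separate equivariant equivalence from plain equivalence using spectral invariants of $\ad$. Two remarks on where the paper supplies what your outline leaves open. First, the obstacle you yourself flag --- keeping the case analysis finite --- is resolved in the paper not by a clever choice of coordinates but by the trichotomy $\dim\ngo\in\{4,5,6\}$ for the nilradical, already established in \cite{ERP} together with strong normal forms in each case ($A,B,C$ all symmetric and commuting when $\dim\ngo=4$; $A,B$ symmetric, $C$ nilpotent when $\dim\ngo=5$; $A_1,A$ normal, $B,C$ nilpotent when $\dim\ngo=6$); moreover the ERP equation itself is never imposed as a fresh polynomial system, since conditions (ii)--(iii) of Theorem \ref{main} already encode it, and the only remaining equations are the Jacobi identity, handled efficiently by transporting everything through the isomorphism $\theta:\slg_4(\RR)\to\sog(3,3)$ acting on $\Lambda^2\ggo_1^*$. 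Without some such device your step two is not obviously tractable. Second, for the equivalence of every $(G_{\mu_{rt}},\vp)$ with $(G_{\mu_B},\vp)$ the paper does not pass through the homogeneous-space picture you describe (which is the mechanism of \cite{FinRff2}); it instead exhibits a derivation $D\in\Der(\hg)\cap\sug(3)$ commuting with $\ad e_7|_{\hg}$ and invokes \cite[Proposition 3.5]{ERP} to replace $A_1,A$ by their symmetric parts --- the two mechanisms are equivalent in substance. Your proposed invariant for pairwise non-isomorphism of the $\mu_{rt}$ (real eigenvalue data of $\ad e_7$ on the nilradical, up to scaling) is exactly the one used, and the pairwise distinctness of the five sporadic examples follows most cleanly from Alekseevskii's rigidity for completely solvable groups rather than from curvature invariants, though your route would also work.
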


More friendly descriptions of these Lie algebras are given in Examples \ref{B}, \ref{rt}, \ref{M1}, \ref{M2}, \ref{M3} and \eqref{Jdef} (see \eqref{struc} first).  The proof of this theorem follows from Propositions \ref{n4-clasif}, \ref{n5-clasif} and \ref{n6-clasif} and is developed in Sections \ref{n4-sec}, \ref{n5-sec} and \ref{n6-sec}, after some preliminary material given in Section \ref{preli}.

We now list some interesting properties of the ERP $G_2$-structures obtained in the classification:

\begin{enumerate}[{\small $\bullet$} ]
\item $\mu_B$ was originally found in \cite{Bry}, $\mu_J$ in \cite{LS-ERP}, the $\mu_{Mi}$'s in \cite{ERP} and the curve from \cite{FinRff2} belongs to the $2$-parameter family $ \mu_{rt}$.

\item They are all steady Laplacian solitons and expanding Ricci solitons (see \cite[Corollary 4.8]{ERP}).

\item They all have torsion $2$-form equal to $\tau=e^{12}-e^{56}$.

\item With the only exception of $\mu_J$, they are all exact; indeed,
$$
\vp=d_\mu\left(3\tau - (\tr{A_1})^{-1} e^{34}\right), \qquad A_1:=\ad_\mu{e_7}|_{\spann\{ e_3,e_4\}}.
$$
Note that $(\tr{A_1})^{-1}=\tfrac{3}{2}$, $\tfrac{\sqrt{30}}{3}$, $3$, $\sqrt{6}$ for $\mu$ given by $\mu_B$ (or $ \mu_{rt}$), $\mu_{M1}$, $\mu_{M2}$, $\mu_{M3}$, respectively.  On the contrary, one obtains that $(G_{\mu_J},\vp)$ is not exact by using that the ERP condition implies that $e^{347}$ would be exact, which is impossible since $de^{ij}\perp e^{347}$ for any $i,j$.

\item $\hg:=\spann\{ e_1,\dots,e_6\}$ is always a unimodular ideal and the corresponding $\SU(3)$-structures $(\hg,\omega,\rho^+)$ are all {\it half-flat} (i.e.\ $d\omega^2=0$ and $d\rho^+=0$).  It is in addition {\it coupled} for $\mu_B$ (i.e.\ $d\omega=\tfrac{1}{3}\rho^+$).  For $\mu_J$, it is straightforward to see that the corresponding $\SU(3)$-structure on the ideal $\hg_1:=\{ e_1-e_2+e_7\}^\perp$ is {\it symplectic half-flat} (i.e.\ $d\omega_1=0$ and $d\rho_1^+=0$).

\item All Lie algebras in \eqref{clas-list} are completely solvable and the only unimodular one is $\mu_J$.  It was proved in \cite[Theorem 6.7]{FinRff2} that $G_{\mu_J}$ is the only unimodular Lie group admitting an ERP $G_2$-structure. The question of whether $G_{\mu_J}$ admits a lattice is still open.

\item For each $\mu$ in \eqref{clas-list}, the simply connected Lie group $G_\mu$ is the only Lie group with Lie algebra $\mu$; indeed, the center of $G_\mu$ is trivial since the center of the Lie algebra is so and the exponential function is a diffeomorphism.

\item The Betti numbers of each Lie algebra are given in Table \ref{bin}, together with some information on the respective nilradical $\ngo$.

\item Concerning symmetries, we have included in Table \ref{bin} for each of the examples both the subgroup of automorphisms of $\vp$ and the subgroup of isometries of $\ip_\vp$ which are also Lie group automorphisms (see Section \ref{sym-sec} for a more detailed study of symmetries).
\end{enumerate}

{\small \begin{table}

\renewcommand{\arraystretch}{1.6}
$$
\begin{array}{|c|l|} \hline
\mu_{B} &de^7=0, \quad de^3=\tfrac{1}{3}e^{37}, \quad de^4=\tfrac{1}{3}e^{47}, \quad de^1=-\tfrac{1}{6}e^{17}, \quad de^2=-\tfrac{1}{6}e^{27}, \\
& de^5=\tfrac{1}{3}e^{14}+\tfrac{1}{3}e^{23}+\tfrac{1}{6}e^{57}, \quad de^6=\tfrac{1}{3}e^{13}-\tfrac{1}{3}e^{24}+\tfrac{1}{6}e^{67}.\\
\hline
 & de^7=0, \quad de^3=\tfrac{\sqrt{30}}{30}e^{37}, \quad de^4=\tfrac{\sqrt{30}}{15}e^{47},\\
& de^1=-\tfrac{\sqrt{5}}{30}e^{14}-\tfrac{10+\sqrt{30}}{60}e^{17}-\tfrac{\sqrt{5}}{30}e^{23}-\tfrac{5-\sqrt{30}}{30}e^{36}-\tfrac{5-\sqrt{30}}{30}e^{45}-\tfrac{\sqrt{5}}{30}e^{57},\\
\mu_{M1}&de^2=\tfrac{\sqrt{5}}{6}e^{13}+\tfrac{\sqrt{5}}{30}e^{24}-\tfrac{10-\sqrt{30}}{60}e^{27}-\tfrac{1}{6}e^{35}+\tfrac{5-\sqrt{30}}{30}e^{46}-\tfrac{\sqrt{5}}{30}e^{67},\\
&de^5=\tfrac{5+\sqrt{30}}{30}e^{14}-\tfrac{\sqrt{5}}{30}e^{17}+\tfrac{5+\sqrt{30}}{30}e^{23}-\tfrac{\sqrt{5}}{30}e^{36}-\tfrac{\sqrt{5}}{30}e^{45}+\tfrac{10-\sqrt{30}}{60}e^{57},\\
&de^6=
\tfrac{1}{6}e^{13}-\tfrac{5+\sqrt{30}}{30}e^{24}-\tfrac{\sqrt{5}}{30}e^{27}+\tfrac{\sqrt{5}}{6}e^{35}+\tfrac{\sqrt{5}}{30}e^{46}+\tfrac{10+\sqrt{30}}{60}e^{67}.\\
\hline
&de^7=de^3=0, \quad  de^4=\tfrac{1}{3}e^{47}, \\
\mu_{M2}  &de^1=-\tfrac{1}{6}e^{13}-\tfrac{1}{3}e^{17}, \quad  de^2=-\tfrac{1}{3}e^{14}+\tfrac{1}{6}e^{23}-\tfrac{1}{3}e^{35},\\
&de^5=\tfrac{1}{3}e^{14}+\tfrac{1}{3}e^{23}-\tfrac{1}{6}e^{35}, \quad de^6=-\tfrac{1}{3}e^{24}+\tfrac{1}{6}e^{36}-\tfrac{1}{3}e^{45}+\tfrac{1}{3}e^{67}. \\
\hline
&de^7=de^3=0,  \quad de^4=\tfrac{\sqrt{6}}{6}e^{47},\\
&de^1=-\tfrac{\sqrt{2}}{12}e^{14}-\tfrac{1}{6}^{17}+\tfrac{\sqrt{2}}{6}e^{23}-\tfrac{1}{6}e^{36}-\tfrac{2-\sqrt{6}}{12}e^{45}-\tfrac{\sqrt{2}}{12}e^{57},\\
\mu_{M3} &de^2=\tfrac{\sqrt{2}}{6}e^{13}+\tfrac{\sqrt{2}}{12}^{24}-\tfrac{1}{6}e^{27}-\tfrac{1}{6}e^{35}+\tfrac{2-\sqrt{6}}{12}e^{46}-\tfrac{\sqrt{2}}{12}e^{67},\\
&de^5=\tfrac{2+\sqrt{6}}{12}e^{14}-\tfrac{\sqrt{2}}{12}^{17}+\tfrac{1}{6}e^{23}+\tfrac{\sqrt{2}}{6}e^{36}-\tfrac{\sqrt{2}}{12}e^{45}+\tfrac{1}{6}e^{57},\\
& de^6=\tfrac{1}{6}e^{13}-\frac{2+\sqrt{6}}{12}e^{24}-\tfrac{\sqrt{2}}{12}e^{27}+\tfrac{\sqrt{2}}{6}e^{35}+\tfrac{\sqrt{2}}{12}e^{46}+\tfrac{1}{6}e^{67}.\\
\hline
\mu_{J} & de^7=de^3=de^4=0,  \quad  de^1=\tfrac{\sqrt{2}}{6}e^{14}-\tfrac{1}{6}e^{17}-\tfrac{\sqrt{2}}{6}e^{23}-\tfrac{1}{3}e^{36},\\
&de^2=-\tfrac{\sqrt{2}}{6}e^{13}-\tfrac{\sqrt{2}}{6}e^{24}-\tfrac{1}{6}e^{27}+\tfrac{1}{3}e^{46}, \quad de^5=\tfrac{1}{2}e^{57}, \quad
de^6=\tfrac{1}{3}e^{13}-\tfrac{1}{3}e^{24}-\tfrac{1}{6}e^{67}.\\ \hline
 &
de^7=0, \quad de^3=\tfrac{\sqrt{30}}{30}e^{37}-\tfrac{1}{3}re^{47}, \quad de^4=\tfrac{1}{3}e^{47}+\tfrac{1}{3}re^{37}, \\
\mu_{rt} & de^1=-\tfrac{1}{6}e^{17}-\tfrac{1}{3}te^{27}, \quad de^2=-\tfrac{1}{6}e^{27}+\tfrac{1}{3}te^{17}, \\
& de^5=\tfrac{1}{3}e^{14}+\tfrac{1}{3}e^{23}+\tfrac{1}{6}e^{57}+\tfrac{1}{3}(r+t)e^{67}, \quad de^6=\tfrac{1}{3}e^{13}-\tfrac{1}{3}e^{24}+\tfrac{1}{6}e^{67}-\tfrac{1}{3}(r+t)e^{57}.\\ \hline
\end{array}
$$
\caption{Structure coefficients.}\label{alg}
\end{table}}

{\small \begin{table}
\renewcommand{\arraystretch}{1.6}
$$
\begin{array}{|c||c|c|c|c|c|c|c|c|c|c|}\hline
 & b_1 & b_2 & b_3 & b_4 & b_5 & b_6 & \dim{\ngo} & \mbox{nilp. deg.} & \Aut(\mu)\cap G_2 & \Aut(\mu)\cap\Or(7) \\
\hline\hline
\mu_{B}   & 1 & 2 & 2 & 2 & 2 & 0 & 6 & \mbox{2-step} & S^1\times S^1 &  \ZZ_2\ltimes (S^1\times S^1)\\
\hline
\mu_{M1} & 1 & 0 & 0 & 1 & 1 & 0 & 6 & \mbox{4-step} & \ZZ_2 &  \ZZ_2\times \ZZ_2 \\
\hline
\mu_{M2} & 2 & 1 & 0 & 1 & 2 & 1 & 5 & \mbox{3-step} & \ZZ_2 &  \ZZ_2\times \ZZ_2\times \ZZ_2\\
\hline
\mu_{M3} & 2 & 2 & 2 & 2 & 2 & 1 & 5 & \mbox{2-step} & \ZZ_4 & D_4\times  \ZZ_2\\
\hline
\mu_{J}   & 3 & 3 & 1 & 1 & 3 & 3 & 4 & \mbox{abelian} & \Sl_2(\ZZ_3) & S_4\ltimes\ZZ_2^4\\
\hline
 \mu_{rt}   & 1 & 2 & 2 &2 & 2 & 0 & 6 & \mbox{2-step} & S^1\times S^1 & S^1\times S^1\\
\hline
\end{array}
$$
\caption{Betti numbers, nilradical $\ngo$ and symmetries.}\label{bin}
\end{table}}

\section{Preliminaries}\label{preli}

A left-invariant $G_2$-structure on a Lie group is determined by a positive $3$-form on the Lie algebra $\ggo$, which  will always be given by
\begin{align}
\vp:=&e^{127}+e^{347}+e^{567}+e^{135}-e^{146}-e^{236}-e^{245} \notag \\
=&\omega_7\wedge e^7+\omega_3\wedge e^3+\omega_4\wedge e^4+e^{347}, \label{phi}
\end{align}
where $\{ e_1,\dots,e_7\}$ is an orthonormal basis of $\ggo$ and
\begin{equation}\label{omeg}
\omega_7:=e^{12}+e^{56}, \quad \omega_3:=e^{26}-e^{15}, \quad \omega_4:=e^{16}+e^{25}.
\end{equation}

The following is the main structure result in \cite{ERP}.  Let $\theta$ denote the usual representation of $\glg_4(\RR)$ on $\Lambda^2(\RR^4)^*$, that is, $\theta(E)\alpha=-\alpha(E\cdot,\cdot)-\alpha(\cdot,E\cdot)$ for all $E\in\glg_4(\RR)$ and $\alpha\in\Lambda^2(\RR^4)^*$.

\begin{theorem}\label{main}\cite[Theorem 4.7 and Proposition 4.9]{ERP}
Every Lie group endowed with a left-invariant ERP $G_2$-structure is equivariantly equivalent (up to scaling) to some $(G,\vp)$ with torsion
$\tau=e^{12}-e^{56}$, such that $\vp$ is as in \eqref{phi} and the following conditions hold for the Lie algebra $\ggo$ of $G$:
\begin{itemize}
\item[(i)] $\hg:=\spann\{ e_1,\dots,e_6\}$ is a unimodular ideal, $\ggo_0:=\spann\{ e_7,e_3,e_4\}$ is a Lie subalgebra, $\ggo_1:=\spann\{ e_1,e_2,e_5,e_6\}$ is an abelian ideal and $\hg_1:=\spann\{ e_3,e_4\}$ is an abelian subalgebra.

\item[(ii)] $\theta(\ad{e_7}|_{\ggo_1})\tau=\frac{1}{3}\omega_7$, $\theta(\ad{e_3}|_{\ggo_1})\tau=\frac{1}{3}\omega_3$ and $\theta(\ad{e_4}|_{\ggo_1})\tau=\frac{1}{3}\omega_4$.

\item[(iii)] $\theta(\ad{e_7}|_{\ggo_1})\omega_7+\theta(\ad{e_3}|_{\ggo_1})\omega_3+\theta(\ad{e_4}|_{\ggo_1})\omega_4 = \tau +(\tr{\ad{e_7}|_{\hg_1}})\omega_7$.
\end{itemize}
Conversely, if $\ggo$ satisfies conditions (i)-(iii), then $(G,\vp)$ is an ERP $G_2$-structure with torsion $\tau=e^{12}-e^{56}$.
\end{theorem}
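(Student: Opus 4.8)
The plan is to carry out the entire analysis at the level of the Lie algebra $\ggo$ with its Chevalley--Eilenberg differential, converting the analytic ERP identity \eqref{ERPeq} and the closedness $d\vp=0$ into algebraic identities on the structure constants, and then reading off (i)--(iii). The only permitted moves are the action of $G_2=\Aut(\RR^7,\vp)$ on the space of Lie brackets (which fixes $\vp$) together with scaling, and these are precisely what ``equivariantly equivalent up to scaling'' encodes.

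First I would normalize the data. Since any two positive $3$-forms lie in a single $\Gl_7(\RR)$-orbit, a linear isomorphism carries the given structure to one with $\vp$ as in \eqref{phi} and $\{e_1,\dots,e_7\}$ orthonormal for $\ip$. For a closed structure the torsion $\tau=-\ast d\ast\vp$ lies in the fourteen-dimensional summand $\Omega^2_{14}\cong\mathfrak{g}_2$, where $\mathfrak{g}_2$ is the (compact) Lie algebra of the stabilizer $G_2$; the flat case $\tau=0$ (torsion-free, hence trivial here by \cite{AlkKml}) is discarded. As every element of a compact Lie algebra is conjugate into a fixed maximal torus, an element of $G_2$ brings $\tau$ into the torus $\{a\,e^{12}+b\,e^{34}+c\,e^{56}:a+b+c=0\}$ of $\mathfrak{g}_2$, the constraint $a+b+c=0$ being exactly the eigenvalue $-1$ condition for the operator $\alpha\mapsto\ast(\vp\wedge\alpha)$ that cuts out $\Omega^2_{14}$.

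The heart of the argument, and the step I expect to be the main obstacle, is then to show that the two conditions $d\vp=0$ and \eqref{ERPeq} force both the coefficients of $\tau$ and the whole bracket into the desired shape. Expanding $d\tau$ and $d\vp$ in the orthonormal coframe and matching supports should rigidly constrain the structure constants: demanding that every component of $d\tau$ transverse to $\Lambda^2\ggo_1^*\wedge\spann\{e^3,e^4,e^7\}$ vanish, where $\ggo_1=\spann\{e_1,e_2,e_5,e_6\}$, together with $d\vp=0$, is what forces $[\ggo_1,\ggo_1]=0$, makes $\ggo_1$ an ideal and $\ggo_0=\spann\{e_7,e_3,e_4\}$, $\hg_1=\spann\{e_3,e_4\}$ subalgebras; then unimodularity of $\hg=\ggo_1\oplus\hg_1$ drops out since $\ggo/\hg$ is one-dimensional (so $[\ggo,\ggo]\subseteq\hg$ and in particular $de^7=0$). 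The same bookkeeping pins $b=0$, whence $c=-a$ by $a+b+c=0$, so that after scaling $\tau=e^{12}-e^{56}$. The difficulty is that none of this structure may be assumed a priori: it must be squeezed out of a quadratic system in the structure constants (the ERP identity is quadratic in the bracket through $\ast(\tau\wedge\tau)$) by a careful, component-by-component elimination.

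Once (i) holds everything becomes formal, and this is the part I would write in full. Because $\ggo_1$ is an abelian ideal normalized by $e_3,e_4,e_7$, one has the identity $d\alpha(e_i,Y,Z)=(\theta(\ad{e_i}|_{\ggo_1})\alpha)(Y,Z)$ for $Y,Z\in\ggo_1$ and $i\in\{3,4,7\}$, and every two-form in sight ($\tau,\omega_7,\omega_3,\omega_4$) lies in $\Lambda^2\ggo_1^*$. Hence
\begin{equation*}
d\tau=e^7\wedge\theta(\ad{e_7}|_{\ggo_1})\tau+e^3\wedge\theta(\ad{e_3}|_{\ggo_1})\tau+e^4\wedge\theta(\ad{e_4}|_{\ggo_1})\tau,
\end{equation*}
while the right-hand side of \eqref{ERPeq}, using $|\tau|^2=2$, $\tau\wedge\tau=-2e^{1256}$, $\ast e^{1256}=e^{347}$ and $\vp-e^{347}=\omega_7\wedge e^7+\omega_3\wedge e^3+\omega_4\wedge e^4$, equals $\tfrac13(\omega_7\wedge e^7+\omega_3\wedge e^3+\omega_4\wedge e^4)$; comparing the $e^7$-, $e^3$- and $e^4$-components gives exactly (ii). Expanding $d\vp$ in the same way, the closedness $d\vp=0$ becomes the single two-form identity (iii) on $\ggo_1$, the trace $\tr\ad{e_7}|_{\hg_1}$ entering through the summand $\omega_3\wedge de^3+\omega_4\wedge de^4$. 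The converse is this computation run backwards: assuming (i)--(iii), condition (ii) yields $\tau=-\ast d\ast\vp=e^{12}-e^{56}$, condition (iii) yields $d\vp=0$, and substituting into the displayed formula for $d\tau$ reproduces $\tfrac16|\tau|^2\vp+\tfrac16\ast(\tau\wedge\tau)$, that is, \eqref{ERPeq}.
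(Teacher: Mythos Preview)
This theorem is not proved in the present paper at all: it is quoted verbatim from \cite[Theorem 4.7 and Proposition 4.9]{ERP} as a preliminary structural result, so there is no ``paper's own proof'' to compare against. What I can do is assess your sketch on its own terms.

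The genuinely hard direction is establishing (i) from closedness and the ERP identity, and you frankly acknowledge this as ``the main obstacle'' without carrying it out. In \cite{ERP} this step is not a bare component-by-component elimination: it relies on the fact that an ERP structure is automatically a solvsoliton with Ricci operator of a very specific shape (eigenvalues $0,-\tfrac{1}{6}|\tau|^2$ on explicit eigenspaces), and the structural decomposition $\ggo=\ggo_0\oplus\ggo_1$ is read off from that Ricci operator. Your plan to extract (i) directly from the quadratic system in the structure constants would be a different route, and I do not see why it should succeed without importing those Ricci/soliton facts.

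More seriously, your ``formal'' part contains a concrete error: condition (iii) is \emph{not} what $d\vp=0$ reduces to. Under (i), one computes
\[
d\vp \;=\; e^{34}\wedge\bigl(\theta(B)\omega_4-\theta(C)\omega_3\bigr)
+e^{37}\wedge\bigl(\theta(B)\omega_7-\theta(A)\omega_3+a\omega_3+c\omega_4\bigr)
+e^{47}\wedge\bigl(\theta(C)\omega_7-\theta(A)\omega_4+b\omega_3+d\omega_4\bigr),
\]
so closedness gives three $2$-form identities, none of which is (iii). In fact these three identities are \emph{consequences} of (ii) and Jacobi: applying $[\theta(B),\theta(C)]=0$, $[\theta(A),\theta(B)]=a\theta(B)+c\theta(C)$ and $[\theta(A),\theta(C)]=b\theta(B)+d\theta(C)$ to $\tau$ and using (ii) yields exactly those three equations. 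Thus $d\vp=0$ follows from (i)+(ii), not from (iii). Condition (iii) instead encodes the remaining requirement, namely that the torsion $-\ast d\ast\vp$ equals $e^{12}-e^{56}$; you have the roles of (ii) and (iii) swapped in both the forward argument and the converse. Your converse (``(ii) yields $\tau=e^{12}-e^{56}$, (iii) yields $d\vp=0$'') is therefore incorrect as stated, and the actual verification that (i)--(iii) force $d\ast\vp=(e^{12}-e^{56})\wedge\vp$ is missing entirely.
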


Structurally, it follows from the above theorem that, up to equivariant equivalence, the Lie bracket $\mu$ of the Lie algebra $\ggo$ of any ERP $(G,\vp)$ with $\tau=e^{12}-e^{56}$ is given by,
\begin{equation}\label{struc}
\mu=(A_1,A,B,C),
\end{equation}
in the sense that $\mu$ is determined by the $2\times 2$ matrix $A_1:=\ad_\mu{e_7}|_{\hg_1}$ and the three $4\times 4$ traceless matrices $A:=\ad_\mu{e_7}|_{\ggo_1}$, $B=\ad_\mu{e_3}|_{\ggo_1}$, $C:=\ad_\mu{e_4}|_{\ggo_1}$, which must satisfy conditions (ii) and (iii). The Jacobi condition, on the other hand, is equivalent to
$$
[A,B]=aB+cC, \quad [A,C]=bB+dC, \quad [B,C]=0, \qquad
A_1=\left[\begin{smallmatrix} a&b\\ c&d\end{smallmatrix}\right].
$$
It was obtained in \cite[Section 5]{ERP} that there are only three possibilities for the underlying vector space $\ngo$ of the nilradical of $\mu$ and that the following additional conditions must hold in each case (up to equivariant equivalence):

\begin{enumerate}[{\small $\bullet$} ]
\item $\ngo=\ggo_1$ ($\dim{\ngo}=4$): this is equivalent to $\mu$ unimodular and one has that $A_1=0$, the matrices $A,B,C$ are all symmetric, they pairwise commute and $\left\{\sqrt{3}A, \sqrt{3}B, \sqrt{3}C\right\}$ is orthonormal relative to the usual inner product $\tr{XY^t}$.  In particular, only one Lie algebra (up to isomorphism) shows up in this case.

\item $\ngo=\RR e_4\oplus\ggo_1$ ($\dim{\ngo}=5$): $A,B$ are symmetric, $C$ is nilpotent, $a=b=c=0$ and $d>0$.

\item $\ngo=\hg$ ($\dim{\ngo}=6$): $A_1$ and $A$ are normal, $B$ and $C$ are nilpotent and
\begin{enumerate}[{\small $\bullet$} ]
\item either $A_1=\left[\begin{smallmatrix} a&0\\ 0&d\end{smallmatrix}\right]$, with $a\leq d$, $a+d>0$,
\item or $A_1=\left[\begin{smallmatrix} a&b\\ -b&a\end{smallmatrix}\right]$, with $a>0$, $b\ne 0$.
\end{enumerate}
\end{enumerate}

Up to equivalence, only five examples of left-invariant ERP $G_2$-structures on a Lie group were known, one with $\dim{\ngo}=4$ and two in each of the other two cases (see \cite[Section 5]{ERP}).

We recall from \cite[Proposition 4.4]{ERP} that in the non-unimodular case (i.e.\ $\dim{\ngo}=5,6$), the equivariant equivalence among the set of closed $G_2$-structures $(G_\mu,\vp)$, where $\mu=(A_1,A,B,C)$ is as in \eqref{struc}, is determined by the action of the subgroup $U_{\hg,\tau}=U_0\cup U_0g\subset G_2$ (see \cite[Lemma 2.3]{ERP}), where $G_2:=\{ h\in\Gl_7(\RR)):h^*\vp=\vp\}\subset\SO(7)$,
\begin{equation}\label{defU0}
U_0:=\left\{\left[\begin{smallmatrix}
1&&& \\ &h_1&& \\ &&h_2&\\ &&&h_3
\end{smallmatrix}\right] : h_i\in\SO(2), \; h_1h_2h_3=I\right\},
\end{equation}
and
$$
ge_7=-e_7, \qquad g_1:=g|_{\hg_1}=
\left[\begin{smallmatrix}
1& \\ &-1
\end{smallmatrix}\right], \qquad g_2:=g|_{\ggo_1}=
\left[\begin{smallmatrix}
0&0&1&0\\ 0&0&0&-1\\ -1&0&0&0\\ 0&1&0&0
\end{smallmatrix}\right].
$$
The action is given as follows (see \cite[(30)]{ERP}): if $h\in U_0$, say with $h_1=
\left[\begin{smallmatrix}
x&y \\ -y&x
\end{smallmatrix}\right]$, $x^2+y^2=1$ and $h_4:=
\left[\begin{array}{cc}
h_2&0 \\ 0&h_3
\end{array}\right]$, $h_2,h_3\in\SO(2)$, then
\begin{equation}\label{equiv1}
h\cdot\mu = \left(h_1A_1h_1^{-1}, h_4Ah_4^{-1},  h_4(xB-yC)h_4^{-1}, h_4(yB+xC)h_4^{-1}\right),
\end{equation}
and
\begin{equation}\label{equiv2}
g\cdot\mu = \left(-g_1A_1g_1^{-1}, -g_2Ag_2^{-1}, g_2Bg_2^{-1}, -g_2Cg_2^{-1}\right).
\end{equation}

\section{Case $\dim{\ngo}=4$}\label{n4-sec}

In this section, we obtain a classification of left-invariant ERP $G_2$-structures on Lie groups with nilradical of dimension $4$, up to equivariant equivalence and scaling.  Recall from Section \ref{preli} that there is a unique Lie group $G$ involved in this case.  Since a $G_2$-structure $\vp$ is ERP if and only if $-\vp$ is so, it is enough to consider $G_2$-structures with a given orientation.

As explained in \cite[Section 6]{ERP}, each $\mu$ in the algebraic subset $\lca\subset\Lambda^2\ggo^*\otimes\ggo$ of all Lie brackets on $\ggo$ is identified with $(G_\mu,\vp)$, where $G_\mu$ denotes the simply connected Lie group with Lie algebra $(\ggo,\mu)$ and $\vp$ is defined by \eqref{phi}.  The orbit $\Gl_7^+(\RR)\cdot\mu$ therefore parametrizes the set of all left-invariant $G_2$-structures on $G_\mu$ with the same orientation as $\vp$, due to the equivariant equivalence,
$$
(G_{h\cdot\mu},\vp) \simeq (G_\mu,\vp(h\cdot,h\cdot,h\cdot)), \qquad\forall h\in\Gl_7(\RR).
$$
Note that two elements in $\lca$ are equivariantly equivalent if and only if they belong to the same $G_2$-orbit, and that they are in the same
$\Or(7)$-orbit if and only if they are equivariantly isometric as Riemannian metrics.  It is known that both assertions hold, without the word
`equivariantly', for completely real solvable Lie brackets (see \cite{Alk}).  In the light of Theorem \ref{main}, any $(G_\mu,\vp)$ will be assumed from
now on to have the structure $\mu=(A_1,A,B,C)$ as in \eqref{struc}.

Assume that $(G_\mu,\vp)$ and $(G_{h\cdot\mu},\vp)$ are both ERP with $\tau_\mu=\tau_{h\cdot\mu}=e^{12}-e^{56}$ for some $h\in\Gl_7(\RR)$, $\det{h}>0$.
Since both $(G_\mu,\ip)$ and $(G_{h\cdot\mu},\ip)$ are solvsolitons by \cite[Corollary 4.8]{ERP}, it follows from the uniqueness of solvsolitons (up to
equivariant isometry and scaling) on a given Lie group (see \cite{solvsolitons} or \cite{BhmLfn}) that we can assume  $h\in\SO(7)$.  Thus
$\Ricci_{h\cdot\mu}=h\Ricci_\mu h^{-1}$ and so by \cite[Proposition 3.9, (iv)]{ERP},
\begin{equation}\label{h}
h=\left[\begin{smallmatrix}h_1 &\\ & h_2\end{smallmatrix}\right], \qquad h_1\in\Or(3), \quad h_2\in\Or(4), \quad\det{h_1}=\det{h_2},
\end{equation}
with respect to the decomposition $\ggo=\ggo_0\oplus\ggo_1$.

Relative to the matrix $J$ of $\tau$ with respect to the basis $\{ e_1,e_2,e_5,e_6\}$, given by,
$$
J:=\left[\begin{smallmatrix}
 0&-1&& \\ 1&0&& \\  &&0&1\\ &&-1&0
\end{smallmatrix}\right],
$$
one obtains the following classic orthogonal decompositions,
$$
\slg_4(\RR) = \sog(4) \oplus \sym_0(4), \quad \sym_0(4) = \pg_1\oplus\pg_2, \quad \spg(2,\RR)=\ug(2)\oplus\pg_1, \quad
 \glg_2(\CC)=\ug(2)\oplus\pg_2,
$$
where
\begin{equation}\label{p1}
\pg_1:=\left\{\left[\begin{smallmatrix}
 a&b&e&f \\ b&-a&-f&e \\  e&-f&c&d\\ f&e&d&-c
\end{smallmatrix}\right] : a,\dots,f\in\RR\right\},
\end{equation}
and $\pg_2$ has the following orthogonal basis,
\begin{equation}\label{Ti}
T_7:=\tfrac{1}{6}\left[\begin{smallmatrix}
 -1&&& \\ &-1&& \\  &&1&\\ &&&1
\end{smallmatrix}\right], \quad
T_3:=\tfrac{1}{6}\left[\begin{smallmatrix} &&&1 \\ &&1& \\  &1&&\\ 1&&&
\end{smallmatrix}\right], \quad
T_4:=\tfrac{1}{6}\left[\begin{smallmatrix} &&1&0 \\ &&0&-1 \\  1&0&&\\ 0&-1&&
\end{smallmatrix}\right].
\end{equation}
Note that $|T_i|^2=\tfrac{1}{9}$ for all $i$ and
\begin{equation}\label{titaT}
\theta(T_7)\tau=\tfrac{1}{3}\omega_7, \qquad \theta(T_3)\tau=\tfrac{1}{3}\omega_3, \qquad \theta(T_4)\tau=\tfrac{1}{3}\omega_4.
\end{equation}

According to the structural results given in Section \ref{preli}, each ERP $G_2$-structure with $\dim{\ngo}=4$ is therefore determined by an abelian
subalgebra $\ag\subset\sym_0(4)$ endowed with an orthogonal basis $\{ A,B,C\}$ such that $|A|^2=|B|^2=|C|^2=\frac{1}{3}$ and
$$
A=E+T_7, \quad B=F+T_3, \quad C=G+T_4,
$$
for some uniquely determined $E,F,G\in\pg_1$.  Thus $\{ E,F,G\}$ is orthogonal and $|E|^2=|F|^2=|G|^2=\frac{2}{9}$.  The only known example in this case is $\mu_J:=(0,A,B,C)$ (see \cite[Example 5.4]{ERP} and \cite[Example 4.7]{LS-ERP}), where
\begin{equation}\label{Jdef}
A=\tfrac{1}{6}\left[\begin{smallmatrix} -1&&& \\ &-1&& \\ &&3&\\ &&&-1
\end{smallmatrix}\right], \quad
B=\tfrac{1}{6}\left[\begin{smallmatrix} 0&-\sqrt{2}&0&2 \\ -\sqrt{2}&0&0&0\\ 0&0&0&0\\ 2&0&0&0
\end{smallmatrix}\right], \quad
C=\tfrac{1}{6}\left[\begin{smallmatrix} \sqrt{2}&0&0&0 \\  0&-\sqrt{2}&0&-2 \\ 0&0&0&0 \\ 0&-2&0&0
\end{smallmatrix}\right],
\end{equation}
and so
$$
E=\tfrac{1}{6}\left[\begin{smallmatrix} 0&&& \\ &0&& \\ &&2&\\ &&&-2
\end{smallmatrix}\right], \quad
F=\tfrac{1}{6}\left[\begin{smallmatrix} 0&-\sqrt{2}&0&1 \\ -\sqrt{2}&0&-1&0\\ 0&-1&0&0\\ 1&0&0&0
\end{smallmatrix}\right], \quad
G=\tfrac{1}{6}\left[\begin{smallmatrix} \sqrt{2}&0&-1&0 \\  0&-\sqrt{2}&0&-1 \\ -1&0&0&0 \\ 0&-1&0&0
\end{smallmatrix}\right].
$$
In particular, $G_{\mu_J}$ is the only unimodular Lie group admitting an ERP $G_2$-structure by \cite[Theorem 6.7]{FinRff2} (see also \cite[Proposition 5.2]{ERP}).  In the following proposition, we show in addition that $G_{\mu_J}$ admits exactly one ERP $G_2$-structure up to equivariant equivalence and scaling (cf.\ \cite[Remark 5.3]{ERP}).

\begin{proposition}\label{n4-clasif}
$(G_{\mu_J},\vp)$ is the unique ERP $G_2$-structure up to equivariant equivalence and scaling among the class of unimodular Lie groups (or equivalently, on Lie groups with nilradical of dimension $4$) endowed with a $G_2$-structure.
\end{proposition}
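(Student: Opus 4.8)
The plan is to reduce everything to showing that \emph{every} normalized ERP datum $\mu=(0,A,B,C)$ occurring in the $\dim\ngo=4$ case is equivariantly equivalent (up to scaling) to $\mu_J$, and then to prove this by exploiting that the resulting triple of $4\times 4$ matrices is commutative, symmetric and orthonormal. The first assertion of the proposition, that $G_{\mu_J}$ is the only unimodular Lie group carrying an ERP $G_2$-structure, is \cite[Theorem 6.7]{FinRff2} (see also \cite[Proposition 5.2]{ERP}), and ``unimodular'' is the same as ``$\dim\ngo=4$'' by the structural results of Section~\ref{preli}; so only uniqueness up to equivariant equivalence is left. By Theorem~\ref{main} every such structure is equivariantly equivalent to a normalized $\mu=(0,A,B,C)$ as in \eqref{struc}, with $A=E+T_7$, $B=F+T_3$, $C=G+T_4$, the matrices $A,B,C$ symmetric and pairwise commuting, $\{\sqrt3 A,\sqrt3 B,\sqrt3 C\}$ orthonormal, and (ii)--(iii) satisfied. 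Since only one Lie algebra (up to isomorphism) occurs here, any two normalized data $\mu,\mu'$ satisfy $\mu'=h\cdot\mu$ for some $h\in\Gl_7(\RR)$; running the argument that precedes \eqref{h} (uniqueness of solvsolitons, then \cite[Proposition 3.9, (iv)]{ERP}) shows that $h$ can be taken orthogonal and block-diagonal, $h=\left[\begin{smallmatrix}h_1&\\&h_2\end{smallmatrix}\right]$ with $h_1\in\SO(3)$ acting on $\ggo_0$ and $h_2\in\SO(4)$ acting on $\ggo_1$. What remains is to arrange $h\in G_2$ fixing $\tau$, so that the equivalence is equivariant and the normalized form is preserved.

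Here is the group we may use. A block-diagonal orthogonal map $\left[\begin{smallmatrix}h_1&\\&h_2\end{smallmatrix}\right]$ with $h_2\in\SO(4)$ belongs to $G_2$ precisely when $h_1\in\SO(3)$ is the image of $h_2$ under the surjection $\SO(4)\to\SO(3)$ given by the action on $\spann\{\omega_7,\omega_3,\omega_4\}$; such a map fixes $\tau=e^{12}-e^{56}$ iff $h_2$ stabilizes $\tau$ together with the metric, i.e.\ $h_2\in\U(2)$. For such an $h$, \eqref{equiv1} shows that the induced move on the triple $(A,B,C)$ is to re-mix it by the $\SO(3)$-matrix by which $h_2$ acts on $(\omega_7,\omega_3,\omega_4)$ and then to conjugate the three matrices by $h_2$; and since conjugation by $h_2\in\U(2)$ preserves the decomposition $\sym_0(4)=\pg_1\oplus\pg_2$ and, by \eqref{titaT} and the injectivity of $\theta|_{\pg_2}$, rotates $(T_7,T_3,T_4)$ by the \emph{same} $\SO(3)$-matrix, this move keeps the normalization $A=E+T_7$, $B=F+T_3$, $C=G+T_4$. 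Denote by $K$ the resulting group of moves on normalized data; we are free to act by $K$, and $\tau$ stays fixed throughout.

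Now use commutativity. As $A,B,C$ commute and are symmetric, there is an orthonormal basis $\{f_1,\dots,f_4\}$ of $\ggo_1$ in which all three are diagonal; let $v_i\in\RR^3$ collect their eigenvalues on $f_i$ and put $V=[v_1\,v_2\,v_3\,v_4]$. Tracelessness gives $V\mathbf 1=0$, and the orthonormality of $\{\sqrt3 A,\sqrt3 B,\sqrt3 C\}$ gives $VV^t=\tfrac13 I_3$; hence $V$ has rank $3$, $\ker V=\RR\mathbf 1$, and therefore $3\,V^tV=I-\tfrac14\mathbf 1\mathbf 1^t$, i.e.\ $|v_i|^2=\tfrac14$ and $v_i\cdot v_j=-\tfrac1{12}$ for $i\ne j$. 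Thus $\{v_i\}$ is, up to a common orthogonal transformation of $\RR^3$, the set of vertices of a regular tetrahedron of circumradius $\tfrac12$; moreover any relabelling of $\{f_i\}$ amounts to such an orthogonal transformation (the tetrahedral symmetry group $\subset\Or(3)$). A common orthogonal transformation of $\RR^3$ is exactly a re-mixing of $(A,B,C)$, hence lies in $K$ whenever it lies in $\SO(3)$; so after acting by a suitable element of $K$ (and possibly relabelling $\{f_i\}$) we may assume that $(A,B,C)$, in the basis $\{f_i\}$, is the same ``diagonal tetrahedral'' triple as $(A_J,B_J,C_J)$ is in its own common eigenbasis.

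It then remains only to move the orthonormal basis $\{f_i\}$ to the common eigenbasis of $(A_J,B_J,C_J)$ by an element of $K$. This is governed by condition (ii), which in the basis $\{f_i\}$ reads $\big((\omega_7)_{ij},(\omega_3)_{ij},(\omega_4)_{ij}\big)=-3\,\tau_{ij}\,(v_i+v_j)$ for all $i<j$: with the $v_i$ already standardised, it determines $\omega_7,\omega_3,\omega_4$ from $\tau$, and imposing that their span be the fixed space of self-dual $2$-forms (with $\tau$ anti-self-dual) forces the position of $\{f_i\}$ relative to $\tau$ to be the prescribed one, up to the stabilizer $\U(2)$ and finitely many monomial changes of basis. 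The crux of the whole proof is this last point, namely checking that the residual discrete ambiguity --- the tetrahedral symmetries that are orientation-reversing on the tetrahedron, and the sign and permutation choices needed to align $\{f_i\}$ with $\tau$ --- is absorbed by $K$; equivalently, that the element of $\Aut(\mu_J)\cap\Or(7)=S_4\ltimes\ZZ_2^4$ produced by the matching can be taken inside $\Aut(\mu_J)\cap G_2=\Sl_2(\ZZ_3)$ (using, where orientation gets in the way, that $\vp$ is ERP iff $-\vp$ is). Granting this, $\mu$ is carried onto $\mu_J$ by an element of $K$, which proves the proposition. A more computational alternative, bypassing the tetrahedron picture, is to parametrise $(E,F,G)\in\pg_1^3$ by the six orthogonality and norm constraints together with the commutativity of $A,B,C$ and condition (iii), use the $K$-action to normalise $E$, and then solve for $F$ and $G$ to land on \eqref{Jdef}.
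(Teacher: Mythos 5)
Your reduction is sound and your tetrahedron picture is a genuinely nice reformulation: the computation $VV^t=\tfrac13 I_3$, $3V^tV=I-\tfrac14\mathbf 1\mathbf 1^t$ correctly shows that the common eigenvalue vectors $v_1,\dots,v_4$ form a regular tetrahedron, and the description of the admissible group $K$ (the $\U(2)$-stabilizer of $\tau$ surjecting onto the $\SO(3)$ that re-mixes $(\omega_7,\omega_3,\omega_4)$ and hence $(A,B,C)$, while rotating $(T_7,T_3,T_4)$ compatibly by \eqref{titaT}) is correct. The problem is that you then write ``the crux of the whole proof is this last point'' and immediately follow with ``Granting this'' --- that is, the decisive step is asserted, not proved. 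Concretely: once $(A,B,C)$ and $(A_J,B_J,C_J)$ are both put in standard diagonal tetrahedral form, the orthogonal map matching the two eigenbases is determined only up to the finite stabilizer of the diagonal triple (signed permutations of the common eigenlines) together with the tetrahedral relabelings, and you must show that at least one representative of this finite coset lies in $\U(2)$ modulo $K$. Nothing a priori forces this; if it failed there would simply be a second, inequivalent ERP structure on $G_{\mu_J}$, and ruling that out is the entire content of the proposition. Your appeal to ``condition (ii) plus self-duality of $\spann\{\omega_7,\omega_3,\omega_4\}$'' is also only a necessary constraint as stated --- (ii) prescribes the specific forms $\omega_7,\omega_3,\omega_4$, not merely a basis of $\Lambda^2_+$, and you have not verified that your constraint pins down $\{f_i\}$ to the extent claimed.

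For comparison, the paper closes exactly this gap by a chain of explicit normalizations: after forcing $h_1=I$, it uses $\Spec(\overline A)=\{-\tfrac16,\tfrac12\}$ to determine $\overline E$ up to a $\U(2)$-conjugation, sets $\overline E=E$, deduces that the matching map $h_2$ commutes with $A$ and hence sends $e_5$ to $\pm e_5$, which together with \eqref{p1} and $[\overline B,\overline C]=0$ reduces the candidates to a circle $\overline B=\overline B(a,b)$, $\overline C=\overline C(a,b)$ with $a^2+b^2=2$; the conjugation invariants $\tr\overline B^3=\tfrac{1}{18}a$ and $\tr\overline C^3=\pm\tfrac{\sqrt2}{18}$ then kill all but $(a,b)=(0,-\sqrt2)$, i.e.\ $\mu_J$. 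Some substitute for this last invariant-theoretic (or otherwise explicit) computation is indispensable in your approach as well; without it the proof is incomplete.
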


\begin{proof}
Let $\ag$ denote the abelian subalgebra associated to $\mu_J$ and assume that $\overline{\ag}\subset\sym_0(4)$ is another ERP abelian subalgebra with corresponding basis
$$
\{\overline{A}=\overline{E}+T_7, \overline{B}=\overline{F}+T_3, \overline{C}=\overline{G}+T_4\}.
$$
It is well known that there exists $h_2\in\SO(4)$ such that $h_2\ag h_2^{-1}=\overline{\ag}$.  If $h_1\in\SO(3)$ is defined in terms of the above basis by
$h_2Ah_2^{-1}=h_1\overline{A}$ and so on, then $\overline{\mu}=h\cdot\mu_J$, where $h$ is as in \eqref{h}.  Thus $h_1=I$ can be assumed up to equivariance equivalence, since there is an $u\in U_{\ggo_1,\tau}$ (see \cite[(10)]{ERP}) such that $u|_{\ggo_0}=h_1^{-1}$; hence
\begin{equation}\label{hc}
h_2(E+T_7)h_2^{-1}=\overline{E}+T_7, \quad h_2(F+T_7)h_2^{-1}=\overline{F}+T_3, \quad h_2(G+T_7)h_2^{-1}= \overline{G}+T_4.
\end{equation}
It follows from \eqref{p1} and \eqref{Ti} that
$$
\la\overline{A}e_1,e_1\ra = a-\tfrac{1}{6}, \qquad \la\overline{A}e_2,e_2\ra = -a-\tfrac{1}{6}, \qquad\mbox{for some}\; a\in\RR,
$$
but since $\Spec(\overline{A})=\{ -\frac{1}{6},\unm\}$, we obtain that $a=0$ and $e_1$, $e_2$ are both eigenvectors of $\overline{A}$ with eigenvalue $-\frac{1}{6}$.  Thus $\overline{E}$ has the form
$$
\overline{E}=\tfrac{1}{6}\left[\begin{smallmatrix} 0&0&& \\ 0&0&& \\ &&c&d\\ &&d&-c
\end{smallmatrix}\right], \qquad c,d\in\RR, \qquad c^2+d^2=\tfrac{1}{9},
$$
and so there exists
$$
u=\left[\begin{smallmatrix} I&\\ &u_2 \end{smallmatrix}\right]\in U_{\ggo_1,\tau}, \qquad u_2:=\left[\begin{smallmatrix} u_3 &\\ & u_3^{-1}\end{smallmatrix}\right]\in U(2), \qquad u_3\in\SO(2),
$$
such that $u_2\overline{E}u_2^{-1}=E$ (see also \eqref{equiv1}).  Note that $u_2T_iu_2^{-1}=T_i$ for all $i$.  This allows us to assume that $\overline{E}=E$, up to equivaraint equivalence.  It now follows from \eqref{hc} that $h_2$ commutes with $A$ and so  $h_2e_5=\pm e_5$, which implies that
$$
\overline{F}e_5=-T_3e_5=-\tfrac{1}{6}e_2 , \qquad \overline{G}e_5=-T_4e_5=-\tfrac{1}{6}e_1.
$$
By \eqref{p1}, the matrices $\overline{F}$ and $\overline{G}$ considerably simplify as follows,
$$
\overline{F}=\tfrac{1}{6}\left[\begin{smallmatrix}a&b&0&1\\b&-a&-1&0\\0&-1&0&0\\1&0&0&0\end{smallmatrix}\right], \qquad
\overline{G}=\tfrac{1}{6}\left[\begin{smallmatrix}a'&b'&-1&0\\b'&-a'&0&-1\\-1&0&0&0\\0&-1&0&0\end{smallmatrix}\right],
$$
 and condition
 $[\overline{B},\overline{C}]=0$ gives that
$$
\overline{B}=\tfrac{1}{6}\left[\begin{smallmatrix} a&b&0&2 \\ b&-a&0&0\\ 0&0&0&0\\ 2&0&0&0
\end{smallmatrix}\right], \qquad
\overline{C}=\tfrac{1}{6}\left[\begin{smallmatrix} -b&a&0&0 \\  a&b&0&-2 \\ 0&0&0&0 \\ 0&-2&0&0
\end{smallmatrix}\right], \qquad a^2+b^2=2.
$$
Notice that $\mu_J$ corresponds to $a=0$, $b=-\sqrt{2}$.  Finally, recall from \eqref{hc} that $\overline{B}$ and $\overline{C}$ are respectively conjugate
to $B$ and $C$, so $\tr{\overline{B}^3}= \frac{1}{18} a$ can not depend on $a$, so $a=0$, and since if $a=0$ then $\tr{\overline{C}^3}= \pm
\frac{\sqrt{2}}{18} $ depending on whether $b=\pm\sqrt{2}$, we obtain that $\overline{\mu}=\mu_J$, concluding the proof.
\end{proof}

\section{Case $\dim{\ngo}=5$}\label{n5-sec}

We classify in this section, up to equivariant equivalence and scaling, all left-invariant ERP $G_2$-structures on Lie groups with nilradical of dimension $5$.  There are only two known examples in this case, which we next describe.

From now on, all matrices in $\glg_6(\RR)$ will be written in terms of the orthogonal basis of $\Lambda^2\ggo_1^*$ defined by
\begin{equation}\label{beta}
\bca:=\{\tau,\overline{\omega}_3,\overline{\omega}_4,\omega_7,\omega_3,\omega_4\}, \quad\mbox{where} \quad\overline{\omega}_3:=e^{26}+e^{15},\quad \overline{\omega}_4:=e^{16}-e^{25},
\end{equation}
see \eqref{omeg} for the definition of the $\omega_i$'s.   Note that each element in $\bca$ has norm equal to $\sqrt{2}$.

\begin{example}\label{M2}\cite[Example 5.5]{ERP}
Let $(G_{\mu_{M2}},\vp)$ be the ERP $G_2$-structure with Lie bracket $\mu_{M2}$ given by
$$
(A_1)_{M2}=\tfrac{1}{3}\left[\begin{smallmatrix} 0&\\ &1\end{smallmatrix}\right], \quad A_{M2}=\tfrac{1}{3}\left[\begin{smallmatrix}-1 &  &  &  \\  & 0 & &  \\
& & 0&\\  &  & & 1\\ \end{smallmatrix}\right], \quad B_{M2}=\tfrac{1}{6}\left[\begin{smallmatrix}-1 &  &  &  \\  & 1 & 2 &  \\  & 2 & 1 &  \\  &  &  & 1
\\\end{smallmatrix}\right], \quad
 C_{M2}=\tfrac{1}{3}\left[\begin{smallmatrix}0 & & &  \\-1 & 0 &  &  \\1 & 0 & 0 &  \\0  & -1 & 1 & 0 \\\end{smallmatrix}\right].
$$
It follows by an easy computation that
$$
\theta(A_{M2})=\tfrac{1}{3}\left[\begin{smallmatrix} & & &1&0&0\\ & & &0&-1&0\\ & & &0&0&0\\1&0&0& & & \\0&-1&0&&&\\0&0&0&&&
\end{smallmatrix}\right], \theta(B_{M2})=\tfrac{1}{3}\left[\begin{smallmatrix}
&&&0&1&0\\&&&-1&0&0\\&&&0&0&1\\0&-1&0&&&\\1&0&0&&&\\0&0&1&&&
\end{smallmatrix}\right],\theta(C_{M2})=\tfrac{1}{3}\left[\begin{smallmatrix} &&&0&0&1\\&&-1&0&0&0\\&1&&0&1&0\\0&0&0&&&1\\0&0&1&&0&\\1&0&0&-1&&
\end{smallmatrix}\right],
$$
with respect to the basis $\bca$ of $\Lambda^2\ggo_1^*$.
\end{example}

\begin{example}\label{M3}\cite[Example 5.8]{ERP}
The ERP $G_2$-structure $(G_{\mu_{M3}},\vp)$ has Lie bracket $\mu_{M3}$ given by
$$
(A_1)_{M3}=\tfrac{1}{6}\left[\begin{smallmatrix} 0&0\\0 & \sqrt{6} \end{smallmatrix}\right], \quad A_{M3}=\tfrac{1}{12}\left[\begin{smallmatrix} -2 & 0 &
-\sqrt{2} & 0\\0 & -2 & 0 & -\sqrt{2} \\-\sqrt{2} & 0 & 2 & 0 \\0 & -\sqrt{2} & 0 & 2\\\end{smallmatrix}\right],
$$
$$
B_{M3}=\tfrac{1}{6}\left[\begin{smallmatrix}0 & \sqrt{2} & 0 & 1 \\\sqrt{2} & 0 & 1 & 0 \\0 & 1 & 0 & -\sqrt{2} \\1 & 0 & -\sqrt{2} & 0\\
\end{smallmatrix}\right], \quad
C_{M3}=\tfrac{1}{12}\left[\begin{smallmatrix}-\sqrt{2} & 0 & 2-\sqrt{6} & 0 \\  0 & \sqrt{2} & 0 & -2+\sqrt{6}\\2+\sqrt{6} & 0 & \sqrt{2} & 0 \\0 &
-2-\sqrt{6} & 0 &-\sqrt{2}\\\end{smallmatrix}\right].
$$
It is easy to check that $\theta(A_{M3})$, $\theta(B_{M3})$ and $\theta(C_{M3})$ are respectively equal to
$$ \tfrac{1}{6}\left[\begin{smallmatrix}
&&&2&0&0\\&&&0&0&0\\&&&\sqrt{2}&0&0\\2&0&\sqrt{2}&&&\\0&0&0&&&\\0&0&0&&&
\end{smallmatrix}\right],  \quad \tfrac{1}{3}\left[\begin{smallmatrix}
&&&0&1&0\\&&&0&0&0\\&&&0&-\sqrt{2}&0\\0&0&0&&&\\1&0&-\sqrt{2}&&&\\0&0&0&&&
\end{smallmatrix}\right],   \quad \tfrac{1}{6}\left[\begin{smallmatrix}
&&&0&0&2\\&&&0&0&0\\&&&0&0&\sqrt{2}\\0&0&0&&&\sqrt{6}\\0&0&0&&0&\\2&0&\sqrt{2}&-\sqrt{6}&&
\end{smallmatrix}\right].
$$
\end{example}

Our aim in this section is to show that, up to equivariant equivalence and scaling, the above two examples are the only ERP $G_2$-structures on Lie groups with nilradical of dimension $5$.

We consider the operator $\ast_{\ggo_2}:\Lambda^2\ggo_2^*\longrightarrow\Lambda^2\ggo_2^*$, whose matrix in terms of $\bca$ is given by
$$
[\ast_{\ggo_2}]=
\left[\begin{smallmatrix}
-I & \\
& I
\end{smallmatrix}\right]\in\glg_6(\RR).
$$
If $M\in\slg_4(\RR)$, then $\theta(M)\ast_{\ggo_2}=-\theta(M^t)\ast_{\ggo_2}$ (see e.g.\ \cite[Lemma 2.1, (vi)]{ERP})
and since $\theta(M)^t=\theta(M^t)$, what this is asserting is that $\theta(M)\in\sog(3,3)$ for any $M\in\slg_4(\RR)$, that is,
\begin{equation}\label{magic}
\theta(M)=\left[\begin{matrix}M_1 &M_2\\M_2^t&M_3\end{matrix}\right], \qquad M_1^t=-M_1, \quad M_3^t=-M_3,
\end{equation}
for some $M_1,M_2,M_3\in\glg_3(\RR)$.  Note that $\theta:\slg_4(\RR)\rightarrow\sog(3,3)$ is clearly an isomorphism.

\begin{proposition}\label{n5-clasif}
Every Lie group endowed with a left-invariant ERP $G_2$-structure and having $5$-dimensional nilradical is equivariantly equivalent (up to scaling) to either $(G_{\mu_{M2}},\vp)$ or $(G_{\mu_{M3}},\vp)$.
\end{proposition}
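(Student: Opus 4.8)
The plan is to run the classification starting from the normal form supplied by Theorem~\ref{main} together with the structural discussion of Section~\ref{preli}: here $\mu=(A_1,A,B,C)$ with $A_1=\left[\begin{smallmatrix}0&0\\0&d\end{smallmatrix}\right]$, $d>0$, with $A,B$ symmetric, $C$ nilpotent, and the Jacobi identity reducing to $[A,B]=0$, $[A,C]=dC$, $[B,C]=0$. By \cite[Proposition 4.4]{ERP} two such brackets are equivariantly equivalent if and only if they lie in the same $U_{\hg,\tau}$-orbit, so it is enough to classify these tuples modulo the action \eqref{equiv1}--\eqref{equiv2}. Since an equivalence between two normalized tuples must carry $A_1$ to $\left[\begin{smallmatrix}0&0\\0&d\end{smallmatrix}\right]$, it cannot involve $g$ (which would change the sign of $d$) and the factor $h_1\in\SO(2)$ acting on $\hg_1$ is forced to be $\pm I$; thus the residual gauge group is, up to finite factors, the circle of conjugations $\left[\begin{smallmatrix}h_2&0\\0&h_2^{-1}\end{smallmatrix}\right]$, $h_2\in\SO(2)$, acting on $\ggo_1$.

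First I would transport everything through the isomorphism $\theta:\slg_4(\RR)\to\sog(3,3)$ of \eqref{magic}. Setting $a=\theta(A)$, $b=\theta(B)$, $c=\theta(C)$, and using that $\theta$ is a Lie algebra isomorphism which sends symmetric matrices to elements of the block form $\left[\begin{smallmatrix}0&M_2\\ M_2^t&0\end{smallmatrix}\right]$ and nilpotents to nilpotents, we obtain that $a,b$ are commuting symmetric elements of $\sog(3,3)$ and $c$ is a nilpotent element with $[a,c]=dc$, $[b,c]=0$. Expanding $a,b,c$ in $3+3$ blocks adapted to $\bca=\{\tau,\overline{\omega}_3,\overline{\omega}_4\}\cup\{\omega_7,\omega_3,\omega_4\}$, condition (ii) of Theorem~\ref{main} reads $a\tau=\tfrac{1}{3}\omega_7$, $b\tau=\tfrac{1}{3}\omega_3$, $c\tau=\tfrac{1}{3}\omega_4$, which determines the first column of each of $a,b,c$ (in particular the first row and column of the antisymmetric upper-left block of $c$ vanish), while condition (iii), $a\omega_7+b\omega_3+c\omega_4=\tau+d\omega_7$, together with the three commutation relations turns into a finite system of polynomial equations for the remaining entries, in which $d$ appears linearly through the lower-right block of $c$.

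It then remains to solve this system modulo the residual gauge group. I would use that $A,B$ are commuting symmetric traceless $4\times 4$ matrices to pass to a common orthonormal eigenbasis of $\ggo_1$: then $[A,C]=dC$ forces $C$ to shift $A$-eigenvalues by $d$ while $[B,C]=0$ forces it to preserve $B$-eigenvalues, and nilpotency of $C$, combined with condition (ii) --- which fixes how the distinguished vectors $\tau,\omega_7,\omega_3,\omega_4$ sit relative to the joint eigenspaces --- reduces the possibilities for $\Spec(A)$ to a short list; plugging these back into (iii) then forces $d$ to be one of the two values $\tfrac{1}{3}$ and $\tfrac{1}{\sqrt{6}}$. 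For each of them, the remaining free entries of $a,b,c$ fill at most a single orbit of the residual gauge group, so the bracket is unique up to equivariant equivalence, and a direct comparison identifies it with $\mu_{M2}$ (for $d=\tfrac{1}{3}$) and $\mu_{M3}$ (for $d=\tfrac{1}{\sqrt{6}}$), as in Examples~\ref{M2} and~\ref{M3}. The delicate point is precisely this last step: since the gauge group available for normalization is only a circle (plus finite factors), one cannot simply diagonalize $A$ and $B$ relative to the fixed data $\tau,\omega_7,\omega_3,\omega_4$, so the mutual compatibility of (ii), (iii) and the three Jacobi relations must be checked entry by entry, and it is the nilpotency of $C$ (equivalently, of $c$ in $\sog(3,3)$) that ultimately collapses the continuum of formal solutions to exactly two.
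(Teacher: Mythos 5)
Your setup coincides with the paper's: normalize to $\mu=(A_1,A,B,C)$ with $A,B$ symmetric, $C$ nilpotent, $A_1=\left[\begin{smallmatrix}0&0\\0&\delta\end{smallmatrix}\right]$, $\delta>0$, pass through $\theta:\slg_4(\RR)\to\sog(3,3)$ in the basis $\bca$, use conditions (ii) and (iii) of Theorem \ref{main} to pin down the first columns and part of the last rows of $\theta(A),\theta(B),\theta(C)$, and then impose $[\theta(A),\theta(B)]=0$, $[\theta(A),\theta(C)]=\delta\theta(C)$, $[\theta(B),\theta(C)]=0$ modulo the residual circle of gauge transformations. All of that is correct and is exactly how the paper begins. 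The problem is that the entire content of the proposition lies in what you defer: showing that the resulting polynomial system has, up to the residual action, exactly two solutions. You assert that ``plugging these back into (iii) then forces $d$ to be one of the two values $\tfrac13$ and $\tfrac{1}{\sqrt6}$'' and that ``the remaining free entries fill at most a single orbit,'' but neither claim is derived. In the paper these follow from a concrete case split on the single entry $c_{23}$ of the skew block of $\theta(C)$: when $c_{23}=0$ the commutator equations force $\delta=\tfrac{\sqrt6}{6}$ and leave a circle $a_{24}^2+a_{34}^2=\tfrac12$ of solutions, and when $c_{23}\neq0$ they force $\delta=\tfrac13$, $c_{23}=\pm1$ and again a circle of solutions. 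Without some such mechanism your ``short list'' is not established.

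Your proposed shortcut --- diagonalize $A,B$ simultaneously and use that $C$ shifts $A$-eigenvalues by $\delta$ and preserves $B$-eigenvalues --- is a reasonable structural idea, but you yourself point out the obstruction: the available gauge group preserving the normalization of $\tau,\omega_7,\omega_3,\omega_4$ is only a circle (times finite factors), so you cannot conjugate to a joint eigenbasis without destroying conditions (ii)--(iii), and you do not explain how to extract the constraint on $\delta$ from the spectral picture while keeping track of where the distinguished $2$-forms sit. Acknowledging a delicate point is not the same as resolving it. Finally, the ``at most a single orbit'' step also hides a genuine subtlety: in the $\delta=\tfrac13$ case the circle $h=\left[\begin{smallmatrix}I&&\\&u&\\&&u^{-1}\end{smallmatrix}\right]\in U_0$ only sweeps out the solutions with $c_{23}=-1$, and one needs the additional element $\bar h\in U_0$ with $h_1=-I$ to reach the component $c_{23}=+1$; your phrase ``plus finite factors'' does not identify this, and omitting it would leave half the solution set unaccounted for. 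So the architecture is right, but the proof as written has a gap precisely where the classification is actually decided.
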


\begin{proof}
We can assume that the Lie bracket $\mu$ of an ERP $G_2$-structure $(G_\mu,\vp)$ is given by $\mu=(A_1,A,B,C)$, where $A,B$ are symmetric, $C$ is nilpotent and $A_1=\left[\begin{smallmatrix} 0
& \\ & \delta\end{smallmatrix}\right]$, $\delta>0$ (see Section \ref{preli}).  Since $\tr{A}=\tr{B}=\tr{C}=0$, the matrices $\theta(A),\theta(B),\theta(C)$ have the form as in \eqref{magic} with respect to the basis $\bca$ given in \eqref{beta}.

It follows from Theorem \ref{main} (ii) and (iii) that $3\theta(A)$, $3\theta(B)$ and $3\theta(C)$ are respectively given by
$$
\left[\begin{smallmatrix}
&&&1&0&0\\
&&&a_{24}&a_{25}&a_{26}\\
&&&a_{34}&a_{35}&a_{36}\\
1&a_{24}&a_{34}&&&\\
0&a_{25}&a_{35}&&&\\
0&a_{26}&a_{36}&&&\\
\end{smallmatrix}\right],
\left[\begin{smallmatrix}
&&&0&1&0\\
&&&b_{24}&b_{25}&b_{26}\\
&&&b_{34}&b_{35}&b_{36}\\
0&b_{24}&b_{34}&&&\\
1&b_{25}&b_{35}&&&\\
0&b_{26}&b_{36}&&&\\
\end{smallmatrix}\right],
\left[\begin{smallmatrix}
0&0&0&0&0&1\\
0&0&c_{23}&c_{24}&c_{25}&-(a_{24}+b_{25})\\
0&-c_{23}&0&c_{34}&c_{35}&-(a_{34}+b_{35})\\
0&c_{24}&c_{34}&0&c_{45}&3\delta\\
0&c_{25}&c_{35}&-c_{45}&0&0\\
1&-(a_{24}+b_{25})&-(a_{34}+b_{35})&-3\delta&0&0\\
\end{smallmatrix}\right],
$$
for some $a_{ij},b_{ij},c_{ij}\in \RR$. Now, since $A_1,A,B,C$ satisfy Jacobi and $\theta$ is a representation, we know that if
$$
R:=[\theta(A),\theta(C)]-\delta \theta(C), \quad S:=[\theta(A),\theta(B)],\quad T:=[\theta(B),\theta(C)],
$$
then $R=S=T=0$.  From the first columns of $R,S$ and $T$ we easily obtain that
$$
c_{25} = b_{26}, c_{35} = b_{36}, b_{24} = a_{25}, b_{34} = a_{35}, c_{24} = a_{26}, c_{34} =
a_{36}, c_{45} = 0.
$$
We list below the other null expressions provided by $R=S=T=0$ that will be needed next:
\begin{align}
&T(4..6,2..3)=\tfrac{1}{9}\left[\begin{smallmatrix} -3 b_{26} \delta  -a_{35} c_{23} &-3 b_{36} \delta +a_{25} c_{23}\\-b_{35} c_{23}& b_{25} c_{23}\\3 a_{25} \delta -b_{36}
c_{23}&3 a_{35}\delta +b_{26} c_{23}
\end{smallmatrix}\right]=0\label{eq1}\\
&R(4..6,2..3)=\tfrac{1}{9}\left[\begin{smallmatrix}-6 a_{26} \delta -a_{34} c_{23}&-6 a_{36} \delta +a_{24}
c_{23}\\-3b_{26}\delta-a_{35}c_{23}&-3b_{36}\delta+a_{25} c_{23}\\3(2a_{24}+b_{25})\delta-a_{36}c_{23}&3(2 a_{34}+b_{35})\delta+a_{26}c_{23}
\end{smallmatrix}\right]=0
\label{eq2}\\
&T(6,5)=\tfrac{1}{9}\left( b_{26}^2+b_{36}^2+b_{25}^2+b_{35}^2+b_{25} a_{24}+b_{35} a_{34}-1\right)=0
\label{eq3}\\
&R(6,4)= \tfrac{1}{9}\left(a_{26}^2+a_{36}^2+a_{24}^2+ a_{34}^2+a_{24}b_{24}+a_{34} b_{35}+9\delta^2-1\right)=0. \label{eq4}
\end{align}

The rest of the proof will be divided into two cases, each case will lead us to one of the known examples.

We first assume that $c_{23}=0$. It follows from \eqref{eq1}, \eqref{eq2} and $\delta>0$ that
$$
b_{26}=b_{36}=a_{25}=a_{35}=a_{26}=a_{36}=0, b_{25}=-2
a_{24}, b_{35}=-2 a_{34}.
$$
From \eqref{eq3} and \eqref{eq4}, we have that $a_{24}^2+a_{34}^3=\tfrac{1}{2}$ and $\delta =\tfrac{\sqrt{6}}{6}$, hence  $\theta(A)$,  $\theta(B)$ and
$\theta(C)$ are respectively equal to
$$
\tfrac{1}{3}\left[\begin{smallmatrix}
&&&1&0&0\\&&&a_{24}&0&0\\&&&a_{34}&0&0\\1&a_{24}&a_{34}&&&\\0&0&0&&&\\0&0&0&&&\\
\end{smallmatrix}\right],
\tfrac{1}{3}\left[\begin{smallmatrix}
&&&0&1&0\\&&&0&-2a_{24}&0\\&&&0&-2a_{34}&0\\0&0&0&&&\\1&-2a_{24}&-2a_{34}&&&\\0&0&0&&&\\
\end{smallmatrix}\right],
\tfrac{1}{6}\left[\begin{smallmatrix}
&&&0&0&2\\&&&0&0&2a_{24}\\&&&0&0&2a_{34}\\
0&0&0&&&\sqrt{6}\\0&0&0&&0&\\2& 2a_{24} & 2a_{34} &-\sqrt{6}&& \\
\end{smallmatrix}\right],
$$
where $a_{24}^2+a_{34}^2=\tfrac{1}{2}$. Note that when $a_{24}=0$ and $a_{34}=\tfrac{\sqrt{2}}{2}$, we obtain $\theta(A_{M3}),\theta(B_{M3})$ and
$\theta(C_{M3})$ from Example \ref{M3}, and by acting with $h:=\left[\begin{smallmatrix}I&&\\&u&\\&&u^{-1}\end{smallmatrix}\right]\in U_0$ as in
\eqref{equiv1}, we have that $\theta(h_1A_{M3}h_1^{-1})$, $\theta(h_1B_{M3}h_1^{-1})$ and $\theta(h_1C_{M3}h_1^{-1})$ are given by
$$
\tfrac{1}{6}\left[\begin{smallmatrix}&&&2&0&0\\&&&\sqrt{2}s&0&0\\&&&\sqrt{2}c&0&0\\2&\sqrt{2}s&\sqrt{2}c&&&\\0&0&0&&&\\0&0&0&&&\\
\end{smallmatrix}\right],
\tfrac{1}{3}\left[\begin{smallmatrix}&&&0&1&0\\&&&0&-\sqrt{2}s&0\\&&&0&-\sqrt{2}c&0\\0&0&0&&&\\1&-\sqrt{2}s&-\sqrt{2}c&&&\\0&0&0&&&\\
\end{smallmatrix}\right],
\tfrac{1}{6}\left[\begin{smallmatrix}&&&0&0&2\\&&&0&0&\sqrt{2}s\\&&&0&0&\sqrt{2}c\\0&0&0&&&\sqrt{6}\\0&0&0&&0&\\2&\sqrt{2}s&\sqrt{2}c&-\sqrt{6}&&\\
\end{smallmatrix}\right].
$$
where $h_1:=\left[\begin{smallmatrix}u&\\&u^{-1}\end{smallmatrix}\right]$ and
$\theta(h_1)=
\left[\begin{smallmatrix}
1&&&&&\\&c&s&&&\\&-s&c&&&\\&&&1&&\\&&&&1&\\&&&&&1
\end{smallmatrix}\right]$, $c^2+s^2=1$.  This implies that we have covered
all the examples with $c_{23}=0$.  In other words, if $c_{23}=0$, then the ERP $G_2$-structure $(A_1,A,B,C)$ is equivariantly equivalent to $\mu_{M3}$.

Suppose now that $c_{23}\neq 0$.  Equation \eqref{eq1} implies that $ b_{25}=b_{35}=0$ and
$$
a_{25}=\tfrac{3b_{36}\delta }{c_{23}},\quad a_{35}=-\tfrac{3b_{26}\delta }{c_{23}},\quad b_{26}\left(c_{23}^2-9\delta^2\right)=0, \quad
b_{36}\left(c_{23}^2-9\delta^2\right)=0.
$$
By \eqref{eq3}, $b_{26}^2+b_{36}^2=1$, thus $c_{23}^2=9\delta^2$.  On the other hand, from \eqref{eq2} we have that
$$
a_{26}=-\tfrac{6a_{34}\delta}{c_{23}},\quad a_{36}=\tfrac{6a_{24}\delta}{c_{23}} ,\quad a_{24}\left(c_{23}^2-36\delta^2\right)=0,\quad
a_{34}\left(c_{23}^2-36\delta^2\right)=0.
$$
Hence $a_{24}=a_{34}=0$ and so \eqref{eq4} gives that $9\delta^2=1$. The matrices $\theta(A)$, $\theta(B)$ and $\theta(C)$ therefore read as follows
$$
\tfrac{1}{3}\left[\begin{smallmatrix} &&&1&0&0\\&&&0&c_{23} b_{36}&0\\&&&0&-c_{23} b_{26}&0\\1&0&0&&&\\0&c_{23} b_{36}&-c_{23}
b_{26}&&&\\0&0&0&&&\\\end{smallmatrix}\right], \tfrac{1}{3}\left[\begin{smallmatrix}&&&0&1&0\\&&&c_{23} b_{36}&0&b_{26}\\&&&-c_{23}
b_{26}&0&b_{36}\\0&c_{23} b_{36}&-c_{23} b_{26}&&&\\1&0&0&&&\\0&b_{26}&b_{36}&&&\\\end{smallmatrix}\right],
\tfrac{1}{3}\left[\begin{smallmatrix}&&&0&0&1\\&&c_{23}&0&b_{26}&0\\&-c_{23}&&0&b_{36}&0\\0&0&0&&&1\\0&b_{26}&b_{36}&&0& \\1& 0& 0&-1& &\\
\end{smallmatrix}\right],
$$
where $a_{25}^2+a_{35}^2=1$, $\delta=\tfrac{1}{3}$ and $c_{23}=\pm1$.  Note that when $a_{25}=-1$, $a_{35}=0$ and $c_{23}=-1$, we obtain $\theta(A_{M2}),\theta(B_{M2}),\theta(C_{M2})$ from Example \ref{M2}, and if we act with
$h\in U_0$ as in the case when $c_{23}=0$, then $\theta(h_1A_{M2}h_1^{-1})$, $\theta(h_1B_{M2}h_1^{-1})$ and $\theta(h_1C_{M2}h_1^{-1})$ are
respectively as follows
$$
\tfrac{1}{3}\left[\begin{smallmatrix}&&&1&0&0\\&&&0&-c&0\\&&&0&s&0\\1&0&0&&&\\0&-c&s&&&\\0&0&0&&&\\
\end{smallmatrix}\right],\quad
\tfrac{1}{3}\left[\begin{smallmatrix}&&&0&1&0\\&&&-c&0&s\\&&&s&0&c\\0&-c&s&&&\\1&0&0&&&\\0&s&c&&&\\
\end{smallmatrix}\right], \quad
\tfrac{1}{3}\left[\begin{smallmatrix}&&&0&0&1\\&&-1&0&s&0\\&1&&0&c&0\\0&0&0&&&1\\0&s&c&&0&\\1&0&0&-1&&\\
\end{smallmatrix}\right],
$$
The above matrices are therefore covered only when $c_{23}=-1$.  To reach the cases with $c_{23}=1$, we have to act with
$$
\bar{h}:=\left[\begin{smallmatrix}1&&&\\&-I&&\\&&u&\\&&&-u^{-1}\end{smallmatrix}\right]\in U_0, \quad
\bar{h}_1:=\left[\begin{smallmatrix}u&\\&-u^{-1}\end{smallmatrix}\right], \quad
\theta(\bar{h}_1)=\left[\begin{smallmatrix}1&&&&&&\\&c&s&&&&\\&-s&c&&&&\\&&&&1&&\\&&&&&-1&\\&&&&&&-1\\\end{smallmatrix}\right], \quad c^2+s^2=1,
$$
to obtain that $\theta(\bar{h}_1A_{M2}\bar{h}_1^{-1})$, $\theta(\bar{h}_1B_{M2}\bar{h}_1^{-1})$ and $\theta(\bar{h}_1C_{M2}\bar{h}_1^{-1})$ are
respectively equal to
$$
\tfrac{1}{3}\left[\begin{smallmatrix}&&&1&0&0\\&&&0&c&0\\&&&0&-s&0\\1&0&0&&&\\0&c&-s&&&\\0&0&0&&&\\
\end{smallmatrix}\right],\quad
-\tfrac{1}{3}\left[\begin{smallmatrix}&&&0&1&0\\&&&c&0&s\\&&&-s&0&c\\0&c&-s&&&\\1&0&0&&&\\0&s&c&&&\\
\end{smallmatrix}\right],\quad
-\tfrac{1}{3}\left[\begin{smallmatrix}&&&0&0&1\\&&1&0&s&0\\&-1&&0&c&0\\0&0&0&&&1\\0&s&c&&0&\\1&0&0&-1&&\\
\end{smallmatrix}\right].
$$
Hence if $c_{23}\neq 0$, then $(A_1,A,B,C)$ is equivariant equivalent to $\mu_{M2}$, which completes the proof of the proposition.
\end{proof}

\section{Case $\dim{\ngo}=6$}\label{n6-sec}

Just as in Section \ref{n5-sec}, we prove here that the two known examples of ERP $G_2$-structures on Lie groups are actually the only ones with a $6$-dimensional nilradical, up to equivalence and scaling.  However, up to equivariant equivalence, a 2-parameter family around one of these examples must be added to complete the classification.

\begin{example}\label{B} (see \cite[Example 5.7]{ERP} and \cite[Example 1]{Bry})
Let $\mu_{B}$ be the ERP $G_2$-structure given by
$$
(A_1)_B=\tfrac{1}{3}\left[\begin{smallmatrix} 1&\\ &1\end{smallmatrix}\right], \quad A_B=\tfrac{1}{6}\left[\begin{smallmatrix} -1&&& \\ &-1&& \\
&&1&\\ &&&1
\end{smallmatrix}\right],\quad
B_B=\tfrac{1}{3}\left[\begin{smallmatrix} &&&0 \\ &&0& \\ &1&&\\ 1&&&
\end{smallmatrix}\right], \quad
C_B=\tfrac{1}{3}\left[\begin{smallmatrix} &&0&0 \\ &&0&0 \\ 1&0&&\\ 0&-1&&
\end{smallmatrix}\right],
$$
from which follows that
$$
\theta(A_B)=\tfrac{1}{3}
\left[\begin{smallmatrix}
&&&1&0&0\\&&&0&0&0\\&&&0&0&0\\
1&0&0&&&\\0&0&0&&&\\0&0&0&&&
\end{smallmatrix}\right], \quad  \theta(B_B) =
\tfrac{1}{3}\left[\begin{smallmatrix}
&&&0&1&0\\&&&0&0&0\\&&&0&0&0\\
0&0&0&&1&\\1&0&0&-1&&\\0&0&0&&&
\end{smallmatrix}\right], \quad   \theta(C_B) =
\tfrac{1}{3}\left[\begin{smallmatrix}
&&&0&0&1\\&&&0&0&0\\&&&0&0&0\\
0&0&0&&&1\\0&0&0&&0&\\1&0&0&-1&&
\end{smallmatrix}\right].
$$
\end{example}

\begin{example}\label{rt}\cite[Example 6.4]{FinRff2}
For each pair $r,t\in\RR$, let $\mu_{rt}$ denote the $G_2$-structure given by
$$
(A_1)_{rt}=\tfrac{1}{3}\left[\begin{smallmatrix} 1&-r\\ r&1\end{smallmatrix}\right], \quad  A_{rt}=\tfrac{1}{6}\left[\begin{smallmatrix} -1&-2t&& \\ 2t&-1&& \\
&&1&2(r+t)\\ &&-2(r+t)&1
\end{smallmatrix}\right],
$$
$$
B_{rt}=\tfrac{1}{3}\left[\begin{smallmatrix} &&0&0 \\ &&0&0 \\ 0&1&&\\ 1&0&&
\end{smallmatrix}\right], \quad
C_{rt}=\tfrac{1}{3}\left[\begin{smallmatrix} &&0&0 \\ &&0&0 \\ 1&0&&\\ 0&-1&&
\end{smallmatrix}\right].
$$
Note that when $r=t=0$ one recovers the example $\mu_B$ above.  It follows from \cite[Proposition 3.5]{ERP} that $(G_{\mu_{rt}},\vp)$ is equivalent to $(G_{\mu_{B}},\vp)$ and consequently ERP for all $r,t\in\RR$.  Furthermore, since
$$
\RR^*\Spec(\ad_{\mu_{rt}}{e_7}|_{\hg}) = \RR^*\{  \tfrac{1}{3}+\im r, \tfrac{1}{3}+\im r, -\tfrac{1}{6}+\im t, -\tfrac{1}{6}+\im t, \tfrac{1}{6}-\im (r+t), \tfrac{1}{6}-\im (r+t)\},
$$
is an isomorphism invariant, one obtains that the family of Lie algebras $\{\mu_{rt}:r,t\in\RR\}$ is pairwise non-isomorphic.  In particular, the family of $G_2$-structures $\{ (G_{\mu_{rt}},\vp):r,t\in\RR\}$ is pairwise non-equivariantly equivalent.

It is straightforward to check that $\theta(A_{rt})$, $\theta(B_{rt})$ and $\theta(C_{rt})$ are respectively given by
$$
\tfrac{1}{3}\left[\begin{smallmatrix}&&&1&0&0\\&&r+2t&0&0&0\\&-r-2t&&0&0&0\\1&0&0&&&\\0&0&0&&&-r\\0&0&0&&r&
\end{smallmatrix}\right],   \quad \tfrac{1}{3}\left[\begin{smallmatrix}
&&&0&1&0\\&&&0&0&0\\&&&0&0&0\\0&0&0&&1&\\1&0&0&-1&&\\0&0&0&&&
\end{smallmatrix}\right],  \quad  \tfrac{1}{3}\left[\begin{smallmatrix}
&&&0&0&1\\&&&0&0&0\\&&&0&0&0\\0&0&0&&&1\\0&0&0&&0&\\1&0&0&-1&&
\end{smallmatrix}\right].
$$
\end{example}

\begin{example}\label{M1} \cite[Example 5.8]{ERP})
Consider the ERP $G_2$-structure $\mu_{M1}$, where
$$
(A_1)_{M1}=\tfrac{1}{30}\left[\begin{smallmatrix}
 \sqrt{30} & 0 \\  0 & 2\sqrt{30}
\end{smallmatrix}\right], \qquad
A_{M1}=\tfrac{1}{60}\left[\begin{smallmatrix}
 -10- \sqrt{30} & 0 & -2\sqrt{5} & 0 \\ 0 & -10 + \sqrt{30} & 0 & -2\sqrt{5} \\ -2\sqrt{5} & 0 & 10- \sqrt{30} & 0 \\ 0 & -2\sqrt{5} & 0 & 10+ \sqrt{30} \\
\end{smallmatrix}\right],
$$
$$
B_{M1}=\tfrac{1}{30}\left[\begin{smallmatrix}
 0 & - \sqrt{5} & 0 & 5- \sqrt{30} \\ 5\sqrt{5} & 0 & 5 & 0 \\ 0 & 5+\sqrt{30} & 0 & \sqrt{5} \\ 5  & 0 & -5 \sqrt{5} & 0 \\
\end{smallmatrix}\right], \qquad
C_{M1}=\tfrac{1}{30}\left[\begin{smallmatrix}
 -\sqrt{5} & 0 & 5- \sqrt{30} & 0 \\ 0 &  \sqrt{5} & 0 & -5+ \sqrt{30} \\5+\sqrt{30} & 0 & \sqrt{5} & 0 \\ 0 & -5-\sqrt{30} & 0 & -\sqrt{5} \\
\end{smallmatrix}\right].
$$
It easily follows that $\theta(A_{M1})$, $\theta(B_{M1})$ and $\theta(C_{M1})$ are respectively equal to
$$
\tfrac{1}{30}\left[\begin{smallmatrix} &&&10&0&0\\&&&0&-\sqrt{30}&0\\&&&2\sqrt{5}&0&0\\10&0&2\sqrt{5}&&&\\0&-\sqrt{30}&0&&&\\0&0&0&&&
\end{smallmatrix}\right],  \tfrac{1}{30}\left[\begin{smallmatrix}
&&&0&10&0\\&&6\sqrt{5}&-\sqrt{30}&0&0\\&-6\sqrt{5}&&0&-4\sqrt{5}&0\\0&-\sqrt{30}&0&&\sqrt{30}&\\10&0&-4\sqrt{5}&-\sqrt{30}&&\\0&0&0&&&
\end{smallmatrix}\right],
\tfrac{1}{15}\left[\begin{smallmatrix} &&&0&0&5\\&&&0&0&0\\&&&0&0&\sqrt{5}\\0&0&0&&&\sqrt{30}\\0&0&0&&0&\\5&0&\sqrt{5}&-\sqrt{30}&&
\end{smallmatrix}\right].
$$
\end{example}

We are now ready to prove the main result in this section.

\begin{proposition}\label{n6-clasif}
Any Lie group endowed with a left-invariant ERP $G_2$-structure and having a $6$-dimensional nilradical is equivariantly equivalent (up to scaling) to either $(G_{\mu_{M1}},\vp)$ or some $(G_{\mu_{rt}},\vp)$, $r,t\in\RR$.
\end{proposition}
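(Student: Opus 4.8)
The plan is to mimic the strategy already used in Proposition~\ref{n5-clasif}: translate the ERP structure equations plus Jacobi into a system of polynomial equations in the entries of $\theta(A),\theta(B),\theta(C)$ written in the basis $\bca$, solve that system, and then use the residual symmetry group $U_0$ (and possibly $g$) from \eqref{equiv1}--\eqref{equiv2} to cut the solution set down to the two model families. Concretely, in the case $\dim\ngo=6$ we know from Section~\ref{preli} that $A_1$ and $A$ are normal, $B$ and $C$ are nilpotent, and $A_1$ is either diagonal $\mathrm{diag}(a,d)$ with $a\le d$, $a+d>0$, or of the form $\left[\begin{smallmatrix}a&b\\-b&a\end{smallmatrix}\right]$ with $a>0$, $b\ne0$. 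Conditions (ii) and (iii) of Theorem~\ref{main} pin down the ``$\omega_7,\omega_3,\omega_4$-components'' of $\theta(A),\theta(B),\theta(C)$ just as in the $\dim\ngo=5$ proof, so each of the three matrices again has the block form \eqref{magic} with its first row and a few entries fixed in terms of $a,b,c,d$. The Jacobi relations $[A,B]=aB+cC$, $[A,C]=bB+dC$, $[B,C]=0$ transport to $[\theta(A),\theta(B)]=a\theta(B)+c\theta(C)$ etc., giving the polynomial constraints among the free parameters.

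First I would handle the diagonal case $A_1=\mathrm{diag}(a,d)$. Here nilpotency of $B$ and $C$, together with the fact that $3\theta(B),3\theta(C)$ are forced (by Theorem~\ref{main}(ii)) to have prescribed first rows/columns, heavily restricts their remaining entries; one extracts a handful of scalar equations from the commutator conditions (as in \eqref{eq1}--\eqref{eq4}) and from $\tr B=\tr C=0$ and $B^4=C^4=0$. I expect the analysis to bifurcate according to whether a certain off-diagonal parameter (the analogue of $c_{23}$ in Proposition~\ref{n5-clasif}) vanishes: one branch should produce, after normalising by an element of $U_0$ acting via $h_1=\left[\begin{smallmatrix}u&\\&u^{-1}\end{smallmatrix}\right]$ and by the scaling freedom, exactly the matrices $\theta(A_{M1}),\theta(B_{M1}),\theta(C_{M1})$ of Example~\ref{M1}; the other branch should collapse to $a=d$ and lead to $\mu_B$. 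In the rotational case $A_1=\left[\begin{smallmatrix}a&b\\-b&a\end{smallmatrix}\right]$ with $b\ne0$, the extra rotation in the $(3,4)$-plane means that the action \eqref{equiv1} of $h_4$ mixes $B$ and $C$ nontrivially, and here the Jacobi system should force $A$ to take the shape appearing in $A_{rt}$ while $B,C$ become the constant matrices $B_{rt},C_{rt}$; identifying the two free parameters $a+d$ (scaled to the normalisation $\mathrm{tr}(A_1)=$ fixed) and $b$ with $r$ and $t$ (up to the scaling and the $U_0$-normalisation of $B$) then gives the $2$-parameter family $\mu_{rt}$, and since Example~\ref{rt} already records that the $\mu_{rt}$ are pairwise non-isomorphic, no further identifications occur.

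The last bookkeeping step is to check that the two branches of the first case genuinely reproduce $\mu_{M1}$ and $\mu_B$ up to \emph{equivariant} equivalence (not merely the same $\theta$-data): since the map $\mu\mapsto(\theta(A),\theta(B),\theta(C))$ together with $A_1$ determines $\mu$, and since $\theta\colon\slg_4(\RR)\to\sog(3,3)$ is an isomorphism, matching the $\theta$-matrices and $A_1$ after a $U_0$-move suffices. I would then observe that $\mu_B=\mu_{00}$ sits inside the $\mu_{rt}$-family, so the final list up to equivariant equivalence and scaling is exactly $\mu_{M1}$ together with $\{\mu_{rt}:r,t\in\RR\}$, as claimed. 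The genuinely laborious part—and the one I would expect to consume most of the proof—is the case analysis extracting and solving the polynomial system from the Jacobi relations in the diagonal case, since unlike the $\dim\ngo=5$ situation the matrix $A$ is only normal (not symmetric) and $A_1$ carries a free ratio $a:d$, so there are more parameters to eliminate before the two sub-branches separate cleanly; keeping track of which $U_0$-normalisations have already been spent is the main source of potential error.
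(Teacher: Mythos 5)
There is a genuine gap in your case division. You split the analysis according to the form of $A_1$ alone (diagonal versus rotational), but in the $6$-dimensional case the matrix $A$ is only assumed \emph{normal}, and its skew-symmetric part is an independent source of moduli that your scheme does not track. Concretely, the subfamily $\mu_{0t}$, $t\neq 0$, has $(A_1)_{0t}=\tfrac{1}{3}I$ (diagonal, indeed scalar) while $A_{0t}$ is normal but not symmetric; these structures are pairwise non-isomorphic and not equivariantly equivalent to $\mu_B$. So your ``diagonal case'' cannot collapse to just $\mu_{M1}$ and $\mu_B$ as you predict --- it must also produce a one-parameter family --- and if in that case you implicitly take $A$ symmetric (as your expected outcome suggests), you lose these examples entirely. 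Relatedly, your identification of the two parameters of $\mu_{rt}$ with ``$a+d$'' and ``$b$'' is off: in $\mu_{rt}$ one has $\alpha=\delta=\tfrac13$ fixed, and the paper obtains $r=-3\beta$ from the skew part of $A_1$ but $t=\tfrac12 a_{23}+\tfrac32\beta$ from the entry $a_{23}$ of $\theta(A)$ coming from the skew-symmetric part of $A$, not from $A_1$ at all.

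The paper's actual route avoids this by splitting on whether \emph{both} $A_1$ and $A$ are symmetric. The polynomial elimination (your Jacobi system, bifurcating on $c_{23}$ and then $b_{23}$) is carried out only in the fully symmetric case and yields exactly $\mu_B$ or $\mu_{M1}$ up to the $U_0$-action. In the remaining case it does not brute-force the system: it writes $\ad{e_7}|_{\hg}=\tilde A+D$ with $D$ skew-symmetric, checks that $D\in\Der(\hg)\cap\sug(3)$ commutes with $\ad{e_7}|_{\hg}$, and invokes \cite[Proposition 3.5]{ERP} to conclude that $(A_1,A,B,C)$ is \emph{equivalent} (not equivariantly) to its symmetrization $(S(A_1),S(A),B,C)$, which is then ERP and hence, by the first case, must be $\mu_B$ --- since $\mu_{M1}$ admits no derivations in $\sug(3)$ --- forcing $\mu=\mu_{rt}$. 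Your proposal is missing both this reduction and any mechanism for handling a non-symmetric normal $A$ over a diagonal $A_1$, so as written the argument would fail to recover the full family $\mu_{rt}$ and would misidentify part of the diagonal branch.
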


\begin{proof}
It follows from Section \ref{preli} that we can assume that the Lie bracket $\mu$ of an ERP $G_2$-structure $(G,\vp)$ is given by $\mu=(A_1,A,B,C)$, where $A_1$ and $A$ are normal and $B$ and $C$ nilpotent.

We first consider the case when $A_1$ and $A$ are symmetric, so it can be assumed that $A_1=\left[\begin{smallmatrix}\alpha&0\\ 0&\delta\end{smallmatrix}\right]$, where $\alpha+\delta>0$ and $\delta\geq \alpha$ (see Section \ref{preli}).  In much the same way as in the proof of Proposition \ref{n5-clasif}, we first compute the form of $\theta(A)$, $\theta(B)$ and $\theta(C)$ by applying the conditions provided by Theorem \ref{main}, (i) and (ii).  From the nullity of the first column of each of the Jacobi condition matrices
$$
R:=[\theta(A),\theta(B)]-\alpha \theta(B)=0, \quad S:=[\theta(A),\theta(C)]-\delta \theta(C)=0,\quad T:=[\theta(B),\theta(C)]=0,
$$
we obtain that
\begin{equation}\label{n6jac}
\begin{array}{l}
c_{25}= b_{26}, \quad c_{35}= b_{36}, \quad c_{24}= a_{26}, \quad c_{34}= a_{36}, \\
b_{24}= a_{25}, \quad b_{34}= a_{35}, \quad b_{45}= 3\alpha, \quad c_{45}= b_{46}= 0,
\end{array}
\end{equation}
and so the matrices $3\theta(A)$, $3\theta(B)$ and $3\theta(C)$ are respectively given by
$$
\left[\begin{smallmatrix}
&&&1&0&0\\&&&a_{24}&a_{25}&a_{26}\\&&&a_{34}&a_{35}&a_{36}\\1&a_{24}&a_{34}&&&\\0&a_{25}&a_{35}&&&\\0&a_{26}&a_{36}&&&
\end{smallmatrix}\right],
\left[\begin{smallmatrix}
&&&0&1&0\\&&b_{23}&a_{25}&b_{25}&b_{26}\\&-b_{23}&&a_{35}&b_{35}&b_{36}\\0&a_{25}&a_{35}&&3\alpha&\\1&b_{25}&b_{35}&-3\alpha&&\\0&b_{26}&b_{36}&&&
\end{smallmatrix}\right],
\left[\begin{smallmatrix}
&&&0&0&1\\&&c_{23}&a_{26}&b_{26}&-(a_{24}+b_{25})\\&-c_{23}&&a_{36}&b_{36}&-(a_{34}+b_{35})\\0&a_{26}&a_{36}&&&3\delta\\0&b_{26}&b_{36}&&0&\\1&-(a_{24}+b_{25})&-(a_{34}+b_{35})&-3\delta&&
\end{smallmatrix}\right].
$$
Note that $b_{23}$ and $\alpha$ can not simultaneously vanish, since in that case $\theta(B)$ and so $B$ would be symmetric, which is a contradiction as $B$ is nilpotent.

We write below the remaining expressions given by the nullity of $R,S,T$ that are needed in the proof:
\begin{align}
&S[4..5,2..3]=\tfrac{1}{9}\left[\begin{smallmatrix}-6\delta a_{26}-a_{34}c_{23} & -6\delta a_{36}+a_{24}c_{23}=0\\
-3\delta b_{26}-a_{35}c_{23} &-3\delta b_{36}+a_{25}c_{23}
\end{smallmatrix}\right]=0,\label{eq5}\\
&R[4..5,2..3]=\tfrac{1}{9}\left[\begin{smallmatrix}-6\alpha a_{25}-a_{34}b_{23} & -6\alpha a_{35}+a_{24}b_{23}\\
3\alpha (a_{24}-b_{25})-a_{35}b_{23} &3\alpha (a_{34}-b_{35})+a_{25}b_{23}
\end{smallmatrix}\right]=0,\label{eq6}\\
&R[6,4]=\tfrac{1}{9}(a_{25}a_{26}+a_{35}a_{36}-a_{24}b_{26}-a_{34}b_{36})=0,\label{eq7}\\
&R[3,2]=\tfrac{1}{9}(a_{34}a_{25}+a_{35}b_{25}+a_{36}b_{26}-a_{24}a_{35}-a_{25}b_{35}-a_{26}b_{36}+3\alpha b_{23})=0,\label{eq8} \\
&R[5,4]=\tfrac{1}{9}\left(a_{25}^2+a_{35}^2-b_{25}a_{24}-b_{35}a_{34}-1+9\right) \alpha^2=0,\label{eq9}\\
&T[5,2..3]=\tfrac{1}{9}\left[\begin{smallmatrix}b_{23} b_{36}-b_{35} c_{23}-3 a_{26} \alpha & -b_{23} b_{26}+b_{25} c_{23}-3 a_{36}
\alpha\end{smallmatrix}\right]=0,\label{eq10}\\
&S[6,2..3]=\tfrac{1}{9}\left[\begin{smallmatrix}3\delta(2a_{24}+b_{25})-a_{36}c_{23} & 3\delta(2a_{34}+b_{35})+a_{26}c_{23}\end{smallmatrix}\right]=0,\label{eq11}\\
&T[6,2..3]=\tfrac{1}{9}\left[\begin{smallmatrix}-b_{23}(a_{34}+b_{35})+3 a_{25} \delta-b_{36} c_{23} &  b_{23} (a_{24}+b_{25})+3 a_{35} \delta+b_{26}
c_{23}\end{smallmatrix}\right]=0.\label{eq12}
\end{align}
The proof is divided into two steps, depending on the value of $c_{23}$. The case when $c_{23}\neq 0$ leads to a contradiction, and when $c_{23}=0$, the value of $b_{23}$ determines if the ERP $G_2$-structure $\mu$ is either equivariant equivalent to $\mu_B$ or $\mu_{M1}$.

We first assume $c_{23}\neq 0$, so from \eqref{eq5},
$$
a_{34}=-\frac{6 \delta}{c_{23}} a_{26},\quad a_{24}=\frac{6 \delta}{c_{23}}a_{36},\quad a_{35}=-\frac{3 \delta}{c_{23}}b_{26},\quad a_{25}=\frac{3
\delta}{c_{23}}b_{36}.
$$
Replacing these values in \eqref{eq6} and \eqref{eq7}, and keeping in mind that $b_{23}$ and $\alpha$ can not simultaneously vanish, we obtain that
$a_{26}=a_{36}=b_{26}=b_{36}=0$. Consequently, we have that $0=S[3,2]=\tfrac{1}{3}\delta c_{23}$, which is a contradiction.

Assume now that $c_{23}=0$, thus $a_{26}=a_{36}=b_{26}=b_{36}=0$ by \eqref{eq5} and from \eqref{eq11}, we therefore obtain that $b_{25}=-2a_{24}$ and $b_{35}=-2a_{34}$.  If $b_{23}=0$, then $\alpha\neq 0$ and it follows from \eqref{eq6} that $a_{24}=a_{34}=a_{25}=a_{35}=0$ and
$$
0=S[6,4]=-\tfrac{1}{9}+\delta^2, \qquad 0=T[6,5]=-\tfrac{1}{9}+\alpha \delta.
$$
Hence $\delta=\alpha=\tfrac{1}{3}$ and so $\theta(A)=\theta(A_B)$, $\theta(B)=\theta(B_B)$, $\theta(C)=\theta(C_B)$, that is, we obtain that $\mu=\mu_B$.
On the contrary, if $b_{23}\neq 0$, then by \eqref{eq6},
$$
a_{25}=-\tfrac{9\alpha}{b_{23}}a_{34},\quad a_{35}=\tfrac{9\alpha}{b_{23}}a_{24},
$$
therefore from \eqref{eq6} and \eqref{eq8}, it follows that
$$
a_{24}^2+a_{34} = 6\alpha^2, \quad b_{23}^2=54\alpha^2,
$$
but \eqref{eq9} and the fact that $\alpha+\delta>0$ imply that $\alpha^2=\tfrac{1}{30}$, and by \eqref{eq12}
$$
\alpha=\tfrac{\sqrt{30}}{30}, \quad \delta=2\tfrac{\sqrt{30}}{30}, \quad  b_{23}=\tfrac{3}{5}\epsilon\sqrt{5},\quad \epsilon:=\pm1,
$$
from which follows that
$$
\theta(A)=\tfrac{1}{30}
\left[\begin{smallmatrix}
&&&10&0&0\\&&&10a_{24}&-5\sqrt{6}\epsilon a_{34}&0\\&&&10a_{34}&5\sqrt{6}\epsilon a_{24}&0\\10&10a_{24}&10a_{34}&&&\\0&-5\sqrt{6}a_{34}&5\sqrt{6}\epsilon
a_{24}&&&\\ 0&0&0&&&
\end{smallmatrix}\right],
$$
$$
\theta(B)=\tfrac{1}{30}\left[\begin{smallmatrix} &&&0&10&0\\&&6\sqrt{5}\epsilon&-5\sqrt{6}\epsilon a_{34}&-20
a_{24}&0\\&-6\sqrt{5}\epsilon&&5\sqrt{6}\epsilon
a_{24}&-20a_{34}&0\\0&-5\sqrt{6}\epsilon a_{34}&5\sqrt{6}\epsilon a_{24}&&\sqrt{30}&\\10&-20a_{24}&-20a_{34}&-\sqrt{30}&&\\
0&0&0&&&\end{smallmatrix}\right], \quad \theta(C)=
\tfrac{1}{15}\left[\begin{smallmatrix}&&&0&0&5\\&&&0&0&5a_{24}\\&&&0&0&5a_{34}\\0&0&0&&&\sqrt{30}\\0&0&0&&0&\\
5&5a_{24}&5a_{34}&-\sqrt{30}&&
\end{smallmatrix}\right].
$$
Let us denote by $\theta(A)(\epsilon,a_{24},a_{34})$, $\theta(B)(\epsilon,a_{24},a_{34})$ and $\theta(C)(\epsilon,a_{24},a_{34})$ the above matrices.  Note
that
$$
\theta(A)\left(1,0,\tfrac{1}{\sqrt{5}}\right)=\theta(A_{M1}), \quad\theta(B)\left(1,0,\tfrac{1}{\sqrt{5}}\right)=\theta(B_{M1}),
\quad\theta(C)\left(1,0,\tfrac{1}{\sqrt{5}}\right)=\theta(C_{M1}).
$$
If we act on $\mu_{M1}$ with $h\in U_0$ as in \eqref{equiv1}, then we obtain the equivariantly equivalent $G_2$-structure for which the matrices $\theta(h_1A_{M1}h_1^{-1})$, $\theta(h_1B_{M1}h_1^{-1})$ and $\theta(h_1C_{M1}h_1^{-1})$ are respectively given by
$$
\theta(A)\left(1,\tfrac{s}{\sqrt{5}},\tfrac{c}{\sqrt{5}}\right),\quad  \theta(B)\left(1,\tfrac{s}{\sqrt{5}},\tfrac{c}{\sqrt{5}}\right), \quad
\theta(C)\left(1,\tfrac{s}{\sqrt{5}},\tfrac{c}{\sqrt{5}}\right).
$$
On the other hand, if we act on $\mu_{M1}$ with $\bar{h}\in U_0$ as in \eqref{equiv1}, then we obtain
$$
\theta(A)\left(-1,\tfrac{s}{\sqrt{5}},\tfrac{c}{\sqrt{5}}\right),\quad -\theta(B)\left(-1,\tfrac{s}{\sqrt{5}},\tfrac{c}{\sqrt{5}}\right), \quad -\theta(C)\left(-1,\tfrac{s}{\sqrt{5}},\tfrac{c}{\sqrt{5}}\right).
$$
Hence, we reach all the above matrices, from which follows that if $c_{23}=0$ and $b_{23}\neq0$, then $\mu$ is equivariantly equivalent to $\mu_{M1}$.

Secondly, we consider the case when $\mu=(A_1,A,B,C)$, where $A_1$ and $A$ are both normal but at least one of them is not symmetric, so $A_1=\left[\begin{smallmatrix}\alpha&\beta\\ -\beta&\delta\end{smallmatrix}\right]$, where either $\beta=0$ or $\alpha=\delta$ (see Section \ref{preli}).  In this case, by using Theorem \ref{main}, (i) and (ii) and the equations provided by only the first columns of the following matrix equations determined by the Jacobi condition,
$$
[\theta(A),\theta(B)]=\alpha \theta(B)-\beta \theta(C), \quad [\theta(A),\theta(C)]=\beta \theta(B)
+\delta \theta(C), \quad [\theta(B),\theta(C)]=0,
$$
one obtains that all the conditions in \eqref{n6jac} hold plus the following extra ones,
$$
a_{45}=0, \quad a_{46}=0, \quad a_{56}=3\beta,
$$
and so the matrices $3\theta(A)$, $3\theta(B)$ and $3\theta(C)$ are respectively given by
$$
\left[\begin{smallmatrix}
&&&1&0&0\\&&a_{23}&a_{24}&a_{25}&a_{26}\\&-a_{23}&&a_{34}&a_{35}&a_{36}\\1&a_{24}&a_{34}&&&\\0&a_{25}&a_{35}&&&3\beta\\0&a_{26}&a_{36}&&-3\beta&
\end{smallmatrix}\right],
\left[\begin{smallmatrix}
&&&0&1&0\\&&b_{23}&a_{25}&b_{25}&b_{26}\\&-b_{23}&&a_{35}&b_{35}&b_{36}\\0&a_{25}&a_{35}&&3\alpha&\\1&b_{25}&b_{35}&-3\alpha&&\\0&b_{26}&b_{36}0&&&
\end{smallmatrix}\right],
\left[\begin{smallmatrix}
&&&0&0&1\\&&c_{23}&a_{26}&b_{26}&-(a_{24}+b_{25})\\&-c_{23}&&a_{36}&b_{36}&-(a_{34}+b_{35})\\0&a_{26}&a_{36}&&&3\delta\\0&b_{26}&b_{36}&&0&\\1&-(a_{24}+b_{25})&-(a_{34}+b_{35})&-3\delta&&
\end{smallmatrix}\right].
$$
From the following fact:
$$
\theta(E)=
\left[\begin{smallmatrix}
0&&&&&\\ &0&a-b&&&\\
&b-a&0&&&\\ &&&0&&\\ &&&&0&a+b\\ &&&&-(a+b)&0
\end{smallmatrix}\right], \quad\mbox{for any} \quad
E=\left[\begin{smallmatrix}
0&-a&&\\ a&0&&\\ &&0&-b\\ &&b&0
\end{smallmatrix}\right], \quad a,b\in\RR,
$$
we obtain that $\ad{e_7}|_{\hg}=\tilde{A}+D$, where $\tilde{A}$ and $D$ are respectively the symmetric and skew-symmetric parts, given by
$$
\tilde{A}:=
\left[\begin{smallmatrix}
\alpha&0&&&\\0&\delta&&&\\
&&&&\\
&&&S(A)&\\
&&&&
\end{smallmatrix}\right], \qquad D:=
\tfrac{1}{6}\left[\begin{smallmatrix}
0&6\beta&&&&\\ -6\beta&0&&&&\\
&&0&-a_{23}-3\beta&&\\
&&a_{23}+3\beta&0&&\\
&&&&0&a_{23}-3\beta\\
&&&&-a_{23}+3\beta&0
\end{smallmatrix}\right],
$$
and $S(A)$ denotes the symmetric part of $A$ (see \eqref{magic}).  Since $D\in\Der(\hg)\cap\sug(3)$ and commutes with $\ad{e_7}|_{\hg}$ as $A$ is normal, it follows from \cite[Proposition 3.5]{ERP} that $(A_1,A,B,C)$ is equivalent as a $G_2$-structure to $(S(A_1),S(A),B,C)$, which is therefore ERP.  By the case worked out above, $(S(A_1),S(A),B,C)$ must be precisely $\mu_B$, since the other possibility, $\mu_{M1}$ (up to equivariance equivalence), does not admit any derivation in $\sug(3)$ (see Section \ref{sym-sec}).  This implies that $\mu=\mu_{rt}$, where $r=-3\beta$, $t=\unm a_{23}+\frac{3}{2}\beta$, completing the proof of the proposition.
\end{proof}

\begin{remark}
It can be directly shown that, up to equivalence, a Lie group with an ERP $G_2$-structure has to be completely solvable by only using the structure results obtained in \cite{ERP} and the fact that they are solvsolitons (see \cite[Corollary 1.2]{ERP}).  Indeed, it follows from Theorem \ref{main}, (ii) that $\theta(A)$, $\theta(B)$ and $\theta(C)$ have respectively the form
$$
\left[\begin{smallmatrix}
0&0&0&1&0&0\\
0&0&a_{23}&a_{24}&a_{25}&a_{26}\\
0&-a_{23}&0&a_{34}&a_{35}&a_{36}\\
1&a_{24}&a_{34}&0&a_{45}&a_{46}\\
0&a_{25}&a_{35}&-a_{45}&0&a_{56}\\
0&a_{26}&a_{36}&-a_{46}&-a_{56}&0
\end{smallmatrix}\right],\quad
\left[\begin{smallmatrix}
0&0&0&0&1&0\\
0&0&b_{23}&b_{24}&b_{25}&b_{26}\\
0&-b_{23}&0&b_{34}&b_{35}&b_{36}\\
0&b_{24}&b_{34}&0&b_{45}&b_{46}\\
1&b_{25}&b_{35}&-b_{45}&0&b_{56}\\
0&b_{26}&b_{36}&-b_{46}&-b_{56}&0
\end{smallmatrix}\right], \quad
\left[\begin{smallmatrix}
0&0&0&0&0&1\\
0&0&c_{23}&c_{24}&c_{25}&c_{26}\\
0&-c_{23}&0&c_{34}&c_{35}&c_{36}\\
0&c_{24}&c_{34}&0&c_{45}&c_{46}\\
0&c_{25}&c_{35}&-c_{45}&0&c_{56}\\
1&c_{26}&c_{36}&-c_{46}&-c_{56}&0
\end{smallmatrix}\right],
$$
and according to \cite[Theorem 4.8]{solvsolitons}, $\theta(A)$ is always normal, in addition $\theta(B)$ is normal if $\dim{\ngo}=4,5$ and $\theta(C)$ must also be normal if $\dim{\ngo}=4$.  A very easy computation therefore gives that $a_{45}=a_{46}=0$, and in addition $b_{45}=b_{56}=0$ and $c_{46}=c_{56}=0$, respectively.  It is now straightforward to show that they must be symmetric in order to commute (which is mandatory by \cite[Theorem 4.8]{solvsolitons}) and satisfy the Jacobi condition, and so $A$ and $B$ are symmetric if $\dim{\ngo}=5$ and they are all symmetric if $\dim{\ngo}=4$.  In particular, the corresponding solvable Lie groups are completely solvable.  In the case when $\dim{\ngo}=6$, one obtains that the ERP structure is equivalent to  one on the completely solvable Lie group obtained by replacing $A$ with the symmetric part $S(A)$ of $A$, as shown at the end of the proof of Proposition \ref{n6-clasif}.
\end{remark}

\section{Symmetries}\label{sym-sec}

We aim in this section to provide some insight on the symmetries of each of the six ERP $G_2$-structures obtained in the classification Theorem \ref{clasif}.

The isometry group of a left-invariant Riemannian metric on an $n$-dimensional Lie group can be very tricky to compute, even in the completely solvable case.  It is not hard to see, however, that $\Iso(G_\mu,\ip)=KG_\mu$, where $K:=\Iso(G_\mu,\ip)_e$ is the isotropy subgroup at the identity and $G_\mu$ also denotes the subgroup of left-translations. The following conditions are also easily seen to be equivalent:
\begin{itemize}
\item[(i)] $G_\mu$ is normal in $\Iso(G_\mu,\ip)$.
\item[(ii)] $\Iso(G_\mu,\ip)=K\ltimes G_\mu$.
\item[(iii)] $K=\Aut(G_\mu)\cap\Iso(G_\mu,\ip)$, which is identified with the group
$$
\Aut(\mu)\cap\Or(n), \qquad \Or(n):=\Or(\ggo,\ip),
$$
of orthogonal automorphisms of the Lie algebra.
\end{itemize}
This is known to hold if $\mu$ is unimodular and completely solvable (see \cite{GrdWls}).  In any case, the subgroup of isometries of $(G_\mu,\ip)$ given by
$$
\left(\Aut(\mu)\cap\Or(n)\right)\ltimes G_\mu,
$$
is always present and less difficult to compute.

On the other hand, given a Lie group $(G_\mu,\vp)$ endowed with a left-invariant $G_2$-structure, we can also consider the subgroup of automorphisms of $(G_\mu,\vp)$ given by,
$$
(\Aut(\mu)\cap G_2)\ltimes G_\mu \subset \Aut(G_\mu,\vp)\subset\Iso(G_\mu,\ip),
$$
and since $G_2\subset\SO(7)$ ($\ip:=\ip_\vp$), we obtain that
\begin{equation}\label{AA}
\Aut(\mu)\cap G_2\subset\Aut(\mu)\cap\Or(7).
\end{equation}
Note that $(\Aut(\mu)\cap G_2)\ltimes G_\mu = \Aut(G_\mu,\vp)$ also holds in the unimodular completely solvable case.

In this light, we compute in what follows the two groups given in \eqref{AA} for each of the ERP structures appearing in Theorem \ref{clasif}.   It is straightforward to show that the Lie algebra $\Der(\mu)\cap\sog(7)$ of $\Aut(\mu)\cap\Or(7)$ is always zero except for $\mu_B$ and $\mu_{rt}$, where it coincides with $\ug_0$, the $2$-dimensional abelian Lie algebra of the Lie group $U_0\simeq S^1\times S^1$ given in \eqref{defU0}.  This implies that the groups in \eqref{AA} are all finite in the other four cases.

The results of our computations of $G_2$-automorphisms can be summarized as follows.  All the matrices below are written in terms of the basis $\{ e_7,e_3,e_4,e_1,e_2,e_5,e_6\}$.

\begin{enumerate}[{\small $\bullet$} ]
\item $\Aut(\mu_{B})\cap G_2 = \Aut(\mu_{rt})\cap G_2 = U_0 \simeq S^1\times S^1$.
\item[ ]
\item $\Aut(\mu_{M1})\cap G_2  = \la f_0\ra \simeq \ZZ_2$, where $f_0:=\Diag(1,1,1,-1,-1,-1,-1)$.
\item[ ]
\item $\Aut(\mu_{M2})\cap G_2 = \la f_0\ra \simeq  \ZZ_2$.
\item[ ]
\item $\Aut(\mu_{M3})\cap G_2 = \la f_1\ra\simeq  \ZZ_4$, where $f_1|_{\ggo_0}:=\Diag(1,-1,-1)$ and
$$
f_1|_{\ggo_1}:=\left[\begin{smallmatrix}
&0&-1&&\\
&1&0&&\\
&&&0&-1\\
&&&1&0
\end{smallmatrix}\right].
$$
\item[ ]
\item $\Aut(\mu_{J})\cap G_2 \simeq \Sl_2(\ZZ_3)$, the binary tetrahedral group of order $24$.  Indeed, it is easy to check that this group has order $24$, only one element of order $2$ and none of order $12$, a condition that characterizes $\Sl_2(\ZZ_3)$ among the groups of order $24$.
\end{enumerate}

\begin{remark}
From the original presentation of $(G_{\mu_B},\vp)$ as a homogeneous space $(G/K,\psi)$ endowed with a $G$-invariant $G_2$-structure $\psi$ given in \cite[Example 1]{Bry}, where
$$
G/K=\left(\Sl_2(\CC)\ltimes\CC^2\right)/\SU(2),
$$
we obtain that $\Aut(G_{\mu_B},\vp)$ actually contains a $6$-dimensional subgroup isomorphic to $\Sl_2(\CC)$ and that $\SU(2)\subset \Aut(G_{\mu_B},\vp)_e$.  Since $\Aut(\mu_B)\cap G_2 =S^1\times S^1$, this shows that there are indeed automorphisms of $(G_{\mu_B},\vp)$ which are not compositions of Lie group automorphisms and left-translations.  We do not know if this is also the case for the other examples, except for the unimodular case $\mu_J$ (cf. Corollary \ref{sym-J} below).
\end{remark}

On the other hand, concerning isometries we have obtained the following:

\begin{enumerate}[{\small $\bullet$} ]
\item $\Aut(\mu_{B})\cap \Or(7) =  \la f_2\ra\ltimes U_0\simeq \ZZ_2\ltimes (S^1\times S^1)$, where $f_2:=\Diag(1,1,-1,1,-1,-1,1)$.
\item[ ]
\item $\Aut(\mu_{rt})\cap\Or(7) =U_0\simeq S^1\times S^1$.
\item[ ]
\item $\Aut(\mu_{M1})\cap \Or(7)  = \la f_0,f_3\ra \simeq \ZZ_2\times\ZZ_2$, where $f_3:=\Diag(1,-1,1,-1,1,-1,1)$.
\item[ ]
\item $\Aut(\mu_{M2})\cap \Or(7)= \la f_0,f_2,f_4\ra \simeq  \ZZ_2\times\ZZ_2\times\ZZ_2$, where $f_4|_{\ggo_0}:=I$ and
$$
f_4|_{\ggo_1}:=\left[\begin{smallmatrix}
1&&&\\
&0&-1&\\
&-1&0&\\
&&&1
\end{smallmatrix}\right].
$$
\item[ ]
\item $\Aut(\mu_{M3})\cap \Or(7) = \la f_3,f_5,f_6\ra\simeq  D_4\times\ZZ^2$, where $D_4$ is the dihedral group of degree four (and order $8$), $f_5|_{\ggo_0}:=\Diag(1,-1,1)$, $f_6|_{\ggo_0}:=\Diag(1,1,-1)$ and
$$
f_5|_{\ggo_1}:=\frac{1}{3}\left[\begin{smallmatrix}
0& \sqrt{6}&0&\sqrt{3}\\
-\sqrt{6}&0&-\sqrt{3}&0\\
0&\sqrt{3}&0&-\sqrt{6}\\
-\sqrt{3}&0&\sqrt{6}&0
\end{smallmatrix}\right], \qquad
f_6|_{\ggo_1}:=\frac{1}{3}\left[\begin{smallmatrix}
-\sqrt{6}&0&-\sqrt{3}&0\\
0&-\sqrt{6}&0&-\sqrt{3}\\
-\sqrt{3}&0&\sqrt{6}&0\\
0&-\sqrt{3}&0&\sqrt{6}
\end{smallmatrix}\right].
$$
Indeed, the generators satisfy the following relations:
$$
f_3^2=f_5^4=(f_3f_5)^2=e, \quad f_3f_6=f_6f_3, \quad f_5f_6=f_6f_5.
$$
\item $\Aut(\mu_{J})\cap \Or(7) \simeq S_4\ltimes\ZZ_2^4$, where $S_4$ is the symmetric group of degree four (and order $24$), since it is isomorphic to the centralizer in $\Or(4)$ of  the maximal torus $\la A,B,C\ra$ of $\slg_4(\RR)$ (see \eqref{Jdef}).
\end{enumerate}

\begin{corollary}\label{sym-J}
The groups of automorphisms and isometries of the $G_2$-structure $(G_{\mu_J},\vp)$ are respectively given by
$$
\Aut(G_{\mu_J},\vp)= \Sl_2(\ZZ_3)\ltimes G_{\mu_J}, \qquad \Iso(G_{\mu_J},\ip)=\left(S_4\ltimes\ZZ_2^4\right)\ltimes G_{\mu_J}.
$$
\end{corollary}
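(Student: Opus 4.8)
The plan is to reduce both identities to the two ``algebraic'' subgroups $\Aut(\mu_J)\cap\Or(7)$ and $\Aut(\mu_J)\cap G_2$ already computed in Section~\ref{sym-sec}, via the structural description of isometry and automorphism groups available for unimodular completely solvable Lie groups. Recall that $\mu_J$ is the only unimodular bracket in \eqref{clas-list} (see the properties listed after Theorem~\ref{clasif}) and that it is completely solvable; hence, by the Gordon--Wilson theorem \cite{GrdWls}, the equivalent conditions (i)--(iii) of Section~\ref{sym-sec} hold for $\mu=\mu_J$, so that $\Iso(G_{\mu_J},\ip)=\left(\Aut(\mu_J)\cap\Or(7)\right)\ltimes G_{\mu_J}$. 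Substituting the identification $\Aut(\mu_J)\cap\Or(7)\simeq S_4\ltimes\ZZ_2^4$ obtained in Section~\ref{sym-sec} yields the asserted formula for $\Iso(G_{\mu_J},\ip)$.

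For the automorphism group of the $G_2$-structure I would argue as follows. Since $G_2\subset\SO(7)$ and $\ip=\ip_\vp$ is the metric determined by $\vp$, we have $\Aut(G_{\mu_J},\vp)\subset\Iso(G_{\mu_J},\ip)=K\ltimes G_{\mu_J}$ with $K=\Aut(\mu_J)\cap\Or(7)$. Writing an isometry as $f=k\,L_g$ with $k\in K$ and $L_g$ a left translation, and using that $\vp$ is left-invariant (so $L_g^*\vp=\vp$), one gets $f^*\vp=\vp$ if and only if $k^*\vp=\vp$, i.e.\ if and only if $k\in G_2$; thus $\Aut(G_{\mu_J},\vp)=\left(\Aut(\mu_J)\cap G_2\right)\ltimes G_{\mu_J}$, which is exactly the ``Note'' recorded in Section~\ref{sym-sec} in the unimodular completely solvable setting. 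Plugging in $\Aut(\mu_J)\cap G_2\simeq\Sl_2(\ZZ_3)$ from Section~\ref{sym-sec} gives the second formula, and this completes the argument.

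The only substantial input is therefore the pair of finite-group computations underlying Section~\ref{sym-sec}, and I expect the bookkeeping there to be the main obstacle rather than any conceptual point. Concretely: since $\mu_J=(0,A,B,C)$ with $A,B,C$ as in \eqref{Jdef} spanning a maximal torus $\tg\subset\slg_4(\RR)$ acting on $\ggo_1$, one first checks $\Der(\mu_J)\cap\sog(7)=0$, so both groups are finite; then one notes that an orthogonal automorphism preserves $\ggo_1=[\ggo,\ggo]$ and its orthogonal complement $\ggo_0=\spann\{e_7,e_3,e_4\}$, that its restriction $h_2$ to $\ggo_1$ must normalize $\tg$, and that the induced (orthogonal) conjugation action of $h_2$ on $\tg$ forces the action on $\ggo_0$ through the correspondence $e_7\leftrightarrow\sqrt{3}A$, $e_3\leftrightarrow\sqrt{3}B$, $e_4\leftrightarrow\sqrt{3}C$; this identifies $\Aut(\mu_J)\cap\Or(7)$ with the signed-permutation group $S_4\ltimes\ZZ_2^4$ normalizing $\tg$ in $\Or(4)$. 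Intersecting with $G_2$ amounts to imposing $h^*\vp=\vp$ for $\vp$ as in \eqref{phi}, a finite check which cuts this group of order $384$ down to a subgroup of order $24$ having a single involution and no element of order $12$, hence isomorphic to $\Sl_2(\ZZ_3)$. Once these two identifications are in hand, the corollary is a direct substitution.
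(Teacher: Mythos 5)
Your proposal is correct and follows essentially the same route as the paper: the corollary is a direct consequence of the Section~\ref{sym-sec} discussion, namely the Gordon--Wilson splitting $\Iso(G_\mu,\ip)=(\Aut(\mu)\cap\Or(7))\ltimes G_\mu$ and the identity $\Aut(G_\mu,\vp)=(\Aut(\mu)\cap G_2)\ltimes G_\mu$ in the unimodular completely solvable case, combined with the two finite-group computations for $\mu_J$. Your description of $\Aut(\mu_J)\cap\Or(7)$ as the \emph{normalizer} of the maximal torus $\la A,B,C\ra$ in $\Or(4)$ is in fact the accurate reading of what the paper loosely calls the centralizer (the pointwise centralizer would only be $\ZZ_2^4$), so no substantive difference remains.
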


\end{document}